\def\rank{{\textrm{rank}}}
\def\End{{\rm End}}
\def\Hom{{\rm Hom}}
\def\Ker{{\rm Ker}}
\def\rank{{\rm rank}}
\theoremstyle{plain}
\newtheorem{introtheorem}{Theorem}
\newtheorem{introconjecture}[introtheorem]{Conjecture}
\newtheorem{theorem}{Theorem}[section]
\newtheorem{proposition/example}[theorem]{Proposition/Example}
\newtheorem{proposition}[theorem]{Proposition}
\newtheorem{corollary}[theorem]{Corollary}
\newtheorem{lemma}[theorem]{Lemma}
\theoremstyle{definition}
\newtheorem{definition}[theorem]{Definition}
\newtheorem{example}[theorem]{Example}
\newtheorem*{rmk}{Remark}
\newtheorem{conjecture/question}[theorem]{Conjecture/Question}
\newtheorem{remark/definition}[theorem]{Remark/Definition}
\newtheorem{definition/notation}[theorem]{Definition/Notation}
\numberwithin{equation}{section}
\begin{document}
\title{\textbf{Simpson Filtration and Oper Stratum Conjecture}}

\author{Zhi Hu}

\address{ \textsc{School of Science, Nanjing University of Science and Technology, Nanjing 210094,  China}\endgraf \textsc{Department of Mathematics, Mainz University, 55128 Mainz, Germany}}

\email{huz@uni-mainz.de; halfask@mail.ustc.edu.cn}

\author{Pengfei Huang}

\address{ \textsc{School of Mathematical Sciences, University of Science and Technology of China, Hefei 230026, China}\endgraf
\textsc{Laboratoire J.A. Dieudonn\'e, Universit\'e C\^ote d'Azur, CNRS, 06108 Nice, France}}

\email{pfhwang@mail.ustc.edu.cn; pfhwang@unice.fr}

\subjclass[2010]{14D20, 14D22, 14J60, 57N80}

\keywords{$\lambda$-flat bundle, Chain, System of Hodge bundles, Simpson filtration, Oper stratification, Complex variations of Hodge structure, Moduli spaces}
\date{}

\begin{abstract}
In this paper, we prove that for the oper stratification of the de Rham moduli space $M_{\mathrm{dR}}(X,r)$,  the closed oper stratum is the unique minimal stratum with dimension $r^2(g-1)+g+1$, and the open dense  stratum consisting of irreducible flat bundles with stable underlying vector bundles is the unique maximal stratum.
\end{abstract}

\maketitle
\tableofcontents

\section{Introduction}
Many moduli spaces admit stratifications, even arising from different points of view.   For example, Atiyah and Bott studied  the Morse stratification on an infinite dimensional affine space $\mathcal{A}$ of unitary connections on a $C^\infty$-vector bundle over a Riemann surface $X$ of genus $g\geq 2$, which is defined by  the gradient flow of the norm square of  moment map with respect to the gauge group \cite{AB}, and coincides with a stratification defined by the Harder--Narasimhan types of algebraic vector bundles over $X$ \cite{D}. Hitchin introduced the moduli space of (stable) Higgs bundle over $X$ \cite{hi}, it admits two stratifications: one is also the Harder--Narasimhan (HN) stratification defined  by the  Harder--Narasimhan types of the underlying vector bundles, the other one is the Bialynicki-Birula (BB) stratification defined by the fixed points of $\mathbb{C}^*$-action on the moduli space. The latter one also has the Morse theoretic interpretation. Indeed, the norm square of Higgs field is  a Morse function on the moduli space, and is a moment map with respect to the Hamiltonian $S^1$-action, by Kirwan's result \cite{K}, the stratification defined by the upwards Morse flow of that Morse function coincides with the BB stratification. In general, the HN stratification does not coincide with the BB stratification, however, they are the same for the case of rank two due to Hausel \cite{Ha}.  For the moduli space of flat bundles over $X$, the HN stratification can be obviously well-defined. In \cite{CS6}, Simpson constructed a stratification of this moduli space by embedding it into the moduli space of $\lambda$-flat bundles  (varying $\lambda\in \mathbb{C}$) and showing the natural $\mathbb{C}^*$-action on the bigger moduli space defines a BB stratification. Simpson called this stratification the \emph{oper stratification} since the ``minimal'' stratum is the moduli space of opers. So far, we do not know whether the oper stratification can be viewed as a certain Morse stratification.

Let $\mathbb{M}_{\mathrm{Hod}}(X,r)$ be the coarse moduli space of semistable $\lambda$-flat   bundles over $X$ of fixed rank $r$ with vanishing first Chern class, which is called the \emph{Hodge moduli space}. This space has a natural  fibration $\mathbb{M}_{\mathrm{Hod}}(X,r)\to\mathbb{C}, (E,D^\lambda,\lambda)\mapsto\lambda$ such that the fiber over 0 is the \emph{Dolbeault moduli space} $\mathbb{M}_{\mathrm{Dol}}(X,r)$, namely the moduli space of semistable Higgs bundles over $X$ of rank $r$ with vanishing first Chern class; and the fiber over 1 is the \emph{de Rham moduli space} $\mathbb{M}_{\mathrm{dR}}(X,r)$, namely the moduli space of  flat bundles over $X$ of rank $r$. The natural $\mathbb{C}^*$-action on $\mathbb{M}_{\mathrm{Dol}}(X,r)$ by rescaling the Higgs field can be extended to an action on $\mathbb{M}_{\mathrm{Hod}}(X,r)$ via $t\cdot(E,D^\lambda,\lambda):=(E,tD^\lambda,t\lambda)$ for $t\in\mathbb{C}^*$.
  Simpson showed in \cite{CS6} that for each $(E, D^\lambda,\lambda)\in \mathbb{M}_{\mathrm{Hod}}(X,r)$, the limit  $\lim\limits_{t\rightarrow 0}t\cdot(E,D^\lambda,\lambda)$ exists uniquely as  a $\mathbb{C}^*$-fixed point lying in $\mathbb{M}_{\mathrm{Dol}}(X,r)$. If we divide the set $V(X,r)$ of $\mathbb{C}^*$-fixed points   into the connected components $V(X,r)=\coprod_\alpha V_\alpha$, then we will have a stratification $\mathbb{M}_{\mathrm{Hod}}(X,r)=\coprod_\alpha G_\alpha$, where the locally closed subset $G_\alpha$ is defined as $G_\alpha=\{(E,D^\lambda,\lambda)\in\mathbb{M}_{\mathrm{Hod}}(X,r): \lim\limits_{t\rightarrow 0}t\cdot(E,D^\lambda,\lambda)\in V_\alpha\}$.  In particular, the restriction to  the fiber over 0 leads to the BB stratification of  $\mathbb{M}_{\mathrm{Dol}}(X,r)$, and the restriction to  the fiber over 1 gives rises to the oper stratification: $\mathbb{M}_{\mathrm{dR}}(X,r)=\coprod_{\alpha}\mathbb{S}_\alpha$, or $M_{\mathrm{dR}}(X,r)=\coprod_{\alpha}S_\alpha$ for smooth locus. 

In order to obtain the $\mathbb{C}^*$-limit points of flat bundles, the \emph{Simpson filtration} plays a crucial role, which is a filtration of subbundles satisfying two extra conditions: Griffiths transversality for the flat connection and semistability for the associated graded Higgs bundle. Such filtration always exists by iterated destabilizing modifications.

  For the oper stratification, Simpson proposed many interesting conjectures, part of them were already solved by Simpson himself and others, but many of them are still open questions.

 \begin{introconjecture} [\textbf{Foliation  Conjecture}] 
  The Lagrangian fibers of the projections $\mathbb{S}_\alpha\rightarrow V_\alpha$ fit together into a smooth Lagrangian foliation with closed leaves.
\end{introconjecture}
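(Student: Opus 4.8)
The plan is to split the conjecture into three claims about the partition $\mathcal P$ of $M_{\mathrm{dR}}(X,r)$ whose blocks are the fibres $\pi_\alpha^{-1}(v)$, $v\in V_\alpha$, of all the projections $\pi_\alpha\colon S_\alpha\to V_\alpha$ taken together: (a)~$\mathcal P$ is a partition into locally closed complex submanifolds, all of the common dimension $d:=\tfrac12\dim M_{\mathrm{dR}}(X,r)=r^2(g-1)+1$; (b)~the tangent distribution $\mathcal D$, which on each stratum $S_\alpha$ equals $\ker(d\pi_\alpha)$, is the restriction of a single holomorphic rank-$d$ subbundle of $TM_{\mathrm{dR}}(X,r)$; (c)~every block of $\mathcal P$ is closed in $M_{\mathrm{dR}}(X,r)$. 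Granting (a)--(c), $\mathcal D$ is involutive --- it is so along each stratum $S_\alpha$, where it restricts to the relative tangent bundle of the submersion $\pi_\alpha$, and the strata cover $M_{\mathrm{dR}}(X,r)$ --- while the leaves are Lagrangian by hypothesis; hence $(M_{\mathrm{dR}}(X,r),\mathcal D)$ is a smooth (indeed holomorphic) Lagrangian foliation with closed leaves. Claim~(a) is essentially formal: the $S_\alpha$ partition $M_{\mathrm{dR}}(X,r)$, on each one $\pi_\alpha$ is a submersion onto $V_\alpha$ with Lagrangian --- hence $d$-dimensional --- fibres, and blocks attached to distinct indices are disjoint; one only records that $\dim V_\alpha=\dim S_\alpha-d$, which for the closed oper stratum recovers $\dim V_{\alpha_{0}}=g$ from its dimension $r^2(g-1)+g+1$.

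The heart of the matter is (b), and I would attack it through the deformation theory of the Simpson filtration. Fix $(E,\nabla)\in S_\alpha$ together with a Simpson filtration $F^{\bullet}$ whose associated graded Higgs bundle $(\gr^{F}E,\gr^{F}\nabla)$ represents $v=\pi_\alpha(E,\nabla)\in V_\alpha$. A first-order deformation of $(E,\nabla)$ lies in $\ker(d\pi_\alpha)$ precisely when it extends to a first-order deformation of the pair $\bigl((E,\nabla),F^{\bullet}\bigr)$ leaving the graded Higgs bundle fixed; this exhibits $\mathcal D_{(E,\nabla)}$ as the image in $T_{(E,\nabla)}M_{\mathrm{dR}}(X,r)=\mathbb H^{1}\bigl(\End(E)\xrightarrow{[\nabla,\,\cdot\,]}\End(E)\otimes\Omega^{1}_{X}\bigr)$ of the hypercohomology of a subcomplex built from the filtration induced on $\End(E)$, and a Riemann--Roch count, using semistability of $\gr^{F}E$, recovers $\dim\mathcal D_{(E,\nabla)}=d$ uniformly. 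To globalize, I would track how a (generically unique) Simpson filtration propagates in the universal family over $M_{\mathrm{dR}}(X,r)$: on the open locus where its length and graded Hodge numbers stay constant the description above is manifestly holomorphic, so the real task is to show that it still computes $\mathcal D$, holomorphically, across the loci where these data jump. There I expect the jump locus to be the degeneracy locus of the natural comparison maps governing the iterated destabilizing modifications that define the Simpson filtration, and I would reglue the local models along it by a limiting argument on the Rees bundle interpolating a flat bundle with its associated graded.

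For (c) the natural route is a valuative criterion for the stratified ``limit map'' $p\colon M_{\mathrm{dR}}(X,r)\to V(X,r)$, $p(E,\nabla)=\lim_{t\to0}t\cdot(E,\nabla)$, whose fibres are exactly the blocks of $\mathcal P$. I would lift the question to $\mathbb M_{\mathrm{Hod}}(X,r)$: the corresponding fibre of $G_\alpha\to V_\alpha$ is the slice over $\{\lambda=1\}$ of a Bialynicki-Birula cell, and since the $\mathbb C^{*}$-action on $\mathbb M_{\mathrm{Hod}}(X,r)$ is of the kind whose cells are locally trivial affine fibrations over the proper fixed components $V_\alpha$ --- a feature already underlying Simpson's construction in \cite{CS6} --- each such slice is an affine subspace, hence closed inside the affine slice $\{\lambda=1\}\cap G_\alpha$ of its own stratum. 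What remains, and is not formal, is to rule out that such an affine leaf accumulates onto a deeper stratum when one forms its closure in all of $M_{\mathrm{dR}}(X,r)$; equivalently, that although $p$ is discontinuous across stratum boundaries, each fibre $p^{-1}(v)$ is nonetheless closed. I would try to settle this by showing that the closure of a leaf meets every deeper stratum in a union of leaves of strictly smaller dimension, which is impossible by the equidimensionality in (a); this reduces (c) to a semicontinuity statement for the graded of the Simpson filtration along degenerations, to be read off from the same Rees-bundle picture as in (b).

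The principal obstacle throughout is the behavior of the Simpson filtration along the boundaries between strata: it is in general neither unique nor continuous in the flat bundle, and it is exactly at these jump loci that one must certify simultaneously that $\mathcal D$ remains a holomorphic subbundle (claim (b)) and that the leaves do not break open (claim (c)). A sensible first test is the rank-two case, where the Simpson filtration has at most two steps and the closed oper stratum is, up to a twist, the space of projective structures on $X$: there one can hope to verify smoothness and closedness of the foliation by direct computation before committing to the general, filtration-theoretic argument.
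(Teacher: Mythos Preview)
The statement you are attempting to prove is an \emph{open conjecture} in the paper; there is no proof to compare against. The remark immediately following the three conjectures records the status: the Lagrangian property of the fibres is known (the half-dimension lemma in the paper covers this, and it is essentially your (a)); the closedness of the individual leaves --- your (c) --- the authors regard as ``quite obvious,'' for a reason far more direct than yours: each fibre $Y_u^1$ is isomorphic to an affine space by Bialynicki--Birula theory applied to $\mathbb M_{\mathrm{Hod}}(X,r)$ (they cite Hausel--Hitchin and Collier--Wentworth for this), and since $\mathbb M_{\mathrm{dR}}(X,r)$ is itself an affine analytic variety, such a fibre is closed. What remains genuinely open is exactly your (b): that the leaves glue to a smooth foliation across the stratum boundaries. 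The only instance in which the full conjecture is established, as the paper notes, is the moduli of rank-two connections on $\mathbb P^1$ minus four points, and the argument there is specific to that geometry.

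Your diagnosis that (b) is the crux is therefore correct, but your sketch does not close the gap. You yourself flag that the Simpson filtration is neither unique nor continuous across the jump loci, and the proposed ``regluing along degeneracy loci via a limiting argument on the Rees bundle'' is a programme, not a proof; nothing in your outline explains why the candidate distribution $\mathcal D$ remains a holomorphic subbundle precisely where the filtration length or the Hodge type jumps. As for (c), besides being superseded by the short argument above, your reasoning is muddled as written: you aim to show that the closure of a leaf meets deeper strata in ``leaves of strictly smaller dimension,'' yet by your own (a) every leaf has the common dimension $d$, so no such leaves exist. Presumably you mean that the boundary of a $d$-dimensional leaf has dimension strictly less than $d$ and hence cannot contain an entire leaf; but even granting that, the step you actually need --- that the boundary is saturated by leaves --- is exactly the semicontinuity statement you have not established.
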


\begin{introconjecture}[\textbf{Nestedness Conjecture}]
The oper stratification $\mathbb{M}_{\mathrm{dR}}(X,r)=\coprod_{\alpha}\mathbb{S}_\alpha$ is nested, namely, there is a partial order on the index set $\{\alpha\}$ such that 
$$
\overline{\mathbb{S}_\alpha}=\coprod_{\beta\leq \alpha}\mathbb{S}_\beta.
$$
\end{introconjecture}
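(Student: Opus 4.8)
The plan is to transfer the statement to the ambient Hodge moduli space $\mathbb{M}_{\mathrm{Hod}}(X,r)$, where the Bialynicki-Birula machinery for $\mathbb{C}^*$-actions is available, prove an equivalent formulation there, and isolate the extra geometric input needed beyond the general theory. For the reduction, observe that the structure map $\pi\colon\mathbb{M}_{\mathrm{Hod}}(X,r)\to\mathbb{C}$ is equivariant for the scaling action on the target, that each stratum $G_\alpha$ is $\mathbb{C}^*$-invariant (since $\lim_{t\to0}t\cdot(s\cdot y)=\lim_{t\to0}(ts)\cdot y$), and that $\mathbb{M}_{\mathrm{dR}}(X,r)=\pi^{-1}(1)$ is closed. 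One then checks $\overline{\mathbb{S}_\alpha}=\overline{G_\alpha}\cap\mathbb{M}_{\mathrm{dR}}(X,r)$: the inclusion $\subseteq$ is immediate, and if $x=\lim_n x_n$ with $x_n\in G_\alpha$, $\pi(x_n)=\lambda_n\to 1$, then $\lambda_n^{-1}\cdot x_n\in G_\alpha\cap\pi^{-1}(1)=\mathbb{S}_\alpha$ tends to $x$, giving $\supseteq$. Since every $G_\alpha$ meets $\pi^{-1}(1)$ --- each fixed component $V_\alpha$ parametrizes systems of Hodge bundles, each of which extends over $\mathbb{C}$ to a family of $\lambda$-flat bundles in $G_\alpha$ via its Rees (preferred) deformation --- the index sets and closure orders on the two sides match, and the conjecture becomes: $\overline{G_\alpha}$ is a union of strata of $\mathbb{M}_{\mathrm{Hod}}(X,r)$ for every $\alpha$, with $\beta\leq\alpha:\Leftrightarrow G_\beta\subseteq\overline{G_\alpha}$. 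This relation is automatically reflexive and transitive, and antisymmetric once one shows $G_\beta\subsetneq\overline{G_\alpha}$ forces $\dim G_\beta<\dim G_\alpha$ (true as $G_\alpha$ is irreducible and dense in $\overline{G_\alpha}$).

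Writing $p\colon\mathbb{M}_{\mathrm{Hod}}(X,r)\to V(X,r)$, $x\mapsto\lim_{t\to0}t\cdot x$ for the retraction, nestedness is precisely the implication $\overline{G_\alpha}\cap G_\beta\neq\emptyset\Rightarrow G_\beta\subseteq\overline{G_\alpha}$, which I would break into: (i) $\overline{G_\alpha}\cap G_\beta\neq\emptyset\Rightarrow\overline{G_\alpha}\cap V_\beta\neq\emptyset$; (ii) $\overline{G_\alpha}\cap V_\beta\neq\emptyset\Rightarrow V_\beta\subseteq\overline{G_\alpha}$; (iii) $V_\beta\subseteq\overline{G_\alpha}\Rightarrow G_\beta\subseteq\overline{G_\alpha}$. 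Part (i) is formal: $\overline{G_\alpha}$ is closed and $\mathbb{C}^*$-invariant (the closure of an invariant set), hence contains $p(\overline{G_\alpha}\cap G_\beta)\subseteq V_\beta$. For (ii) I would use connectedness of $V_\beta$ and openness of $\overline{G_\alpha}\cap V_\beta$ in $V_\beta$, the latter via the $\mathbb{C}^*$-equivariant local normal form of $\mathbb{M}_{\mathrm{Hod}}(X,r)$ near $V_\beta$ (where $G_\beta$ is the positive-weight part of the normal bundle and an incoming $G_\alpha$ sits where the negative-weight coordinates are nonzero), reducing (ii) to a limit statement in those negative-weight directions --- which, since the local model is not itself $\mathbb{C}^*$-invariant, already needs a genuine deformation argument and the expected smoothness and connectedness of $V(X,r)$ as moduli of systems of Hodge bundles. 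Semicontinuity of ranks and slopes shows along the way that any closure relation forces a degeneration of the numerical type of the Hodge filtration.

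Part (iii) is where the general theory of semiprojective varieties provably fails: on a projective variety one connects $V_\beta$ to the interior of $G_\beta$ through orbit closures using limits as $t\to\infty$, but on $\mathbb{M}_{\mathrm{Hod}}(X,r)$ those limits do not exist, and a closed $\mathbb{C}^*$-invariant subset need \emph{not} be a union of strata. I expect this, together with the uniformity needed in (ii), to be the main obstacle, and the place where the Simpson filtration and the iterated destabilizing modification must be used constructively. Concretely, given $\beta\leq\alpha$ in the combinatorial type order, I would try to realize $G_\beta\subseteq\overline{G_\alpha}$ by building an explicit $\mathbb{C}^*$-invariant family over a curve whose special fiber is a prescribed point of $G_\beta$ and whose generic fiber lies in $G_\alpha$ --- say by deforming the extension classes that glue the graded pieces of a type-$\beta$ system of Hodge bundles, or by compatible Hecke modifications, then applying the iterated-modification algorithm to the generic fiber to confirm that its limiting type is that of $V_\alpha$. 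If such a family can be produced for each covering relation of the type poset, nestedness follows by transitivity of closures; constructing these families uniformly, and preventing them from overshooting into strictly larger strata, is the crux and is presumably why the conjecture is still open.
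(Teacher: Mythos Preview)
The statement you are attempting is the \textbf{Nestedness Conjecture}, which the paper explicitly presents as an \emph{open conjecture}, not as a theorem. There is no proof in the paper to compare your proposal against. The authors' own commentary (in the Remark following the three conjectures) states that very little is known: Simpson proved the rank-two case in \cite{CS6} by a deformation-theoretic argument, and this special case in turn implies the Oper Stratum Conjecture for $r=2$. The present paper does not address nestedness at all; its main contribution is a partial proof of the Oper Stratum Conjecture (Conjecture~3) via dimension estimates on the fixed-point components $V^{\overrightarrow{r},\overrightarrow{d}}$.

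Your proposal is therefore not a proof but a proof \emph{strategy}, and you yourself recognize this in the final paragraph (``is presumably why the conjecture is still open''). The reduction to $\mathbb{M}_{\mathrm{Hod}}(X,r)$ and the decomposition into steps (i)--(iii) is a reasonable way to organize the problem, and your identification of step~(iii) as the genuine obstruction --- the absence of $t\to\infty$ limits on a non-projective variety --- is correct and matches the reason the conjecture remains open. But nothing in your outline constitutes an actual argument for (ii) or (iii): the ``explicit $\mathbb{C}^*$-invariant family'' you propose to build is exactly what nobody knows how to construct in general, and invoking ``the iterated-modification algorithm'' does not by itself produce such families. In short, your proposal is an honest sketch of where the difficulty lies, not a proof, and the paper offers nothing further on this conjecture.
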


\begin{introconjecture}[\textbf{Oper Stratum Conjecture}]
The oper stratum is the unique closed stratum and the unique stratum of minimal dimension.
\end{introconjecture}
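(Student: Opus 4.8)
The plan is to reduce the whole statement to a dimension count on the fixed components $V_\alpha$ of the $\mathbb{C}^*$-action. First I would pin down the two extreme strata. The oper stratum $\mathbb{S}_{\mathrm{oper}}$ is the one lying over the component $V_{\mathrm{oper}}\subset V(X,r)$ of \emph{uniformizing} systems of Hodge bundles $\bigoplus_{p=0}^{r-1}L\otimes(\Omega^1_X)^{\otimes p}$ with all Higgs maps isomorphisms; such a datum is determined by $L\in\Pic^0(X)$, so $V_{\mathrm{oper}}\cong\Pic^0(X)$ is smooth, connected and projective of dimension $g$, and in particular $\mathbb{S}_{\mathrm{oper}}$ is a single stratum. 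A $\lambda$-flat bundle lies over $V_{\mathrm{oper}}$ precisely when its Simpson filtration has uniformizing associated graded, i.e.\ precisely when it is an oper (the Simpson filtration serving as the oper flag); hence $\mathbb{S}_{\mathrm{oper}}$ is exactly the locus of opers, which up to a finite \'etale cover is the product of the affine space of $SL_r$-opers over $X$ (a torsor under $\bigoplus_{i=2}^{r}H^0(X,(\Omega^1_X)^{\otimes i})$, of dimension $(r^2-1)(g-1)$) with the $2g$-dimensional space of rank-one flat bundles, so that $\dim\mathbb{S}_{\mathrm{oper}}=(r^2-1)(g-1)+2g=r^2(g-1)+g+1$. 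Dually I would single out the stratum $\mathbb{S}_0$ lying over the zero section $N(X,r)\subset\mathbb{M}_{\mathrm{Dol}}(X,r)$ and check that $\mathbb{S}_0=\{(E,\nabla):E\text{ semistable}\}$, which contains the locus of $(E,\nabla)$ with stable $E$ (necessarily irreducible, since a flat subbundle has degree $0$) as a dense open subset.

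Next I would establish a uniform dimension formula. The oper stratification is the restriction to the slice $\{\lambda=1\}$ of the Bialynicki-Birula decomposition of $\mathbb{M}_{\mathrm{Hod}}(X,r)$, whose existence rests on Simpson's theorem that the limits $\lim_{t\to 0}$ exist. Since each $G_\alpha$ is $\mathbb{C}^*$-invariant and $\{\lambda\ne 0\}$ carries a free $\mathbb{C}^*$-action with $\{\lambda=1\}$ a transversal, one has $\dim\mathbb{S}_\alpha=\dim G_\alpha-1=\dim V_\alpha+d_\alpha^+-1$, where $d_\alpha^{\pm}$ are the dimensions of the positive/negative-weight parts of $T\mathbb{M}_{\mathrm{Hod}}$ along $V_\alpha$ and $\dim\mathbb{M}_{\mathrm{Hod}}=\dim V_\alpha+d_\alpha^++d_\alpha^-$. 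Now $T\mathbb{M}_{\mathrm{Hod}}|_{V_\alpha}$ is, $\mathbb{C}^*$-equivariantly, $T\mathbb{M}_{\mathrm{Dol}}|_{V_\alpha}$ plus one weight-one line (the $\lambda$-direction), and the holomorphic symplectic form on $\mathbb{M}_{\mathrm{Dol}}$ has $\mathbb{C}^*$-weight one, so its weight-$k$ and weight-$(1-k)$ subspaces on $T\mathbb{M}_{\mathrm{Dol}}|_{V_\alpha}$ pair perfectly, whence $\dim(\text{weight }1)=\dim(\text{weight }0)=\dim V_\alpha$ there. A short bookkeeping then gives $d_\alpha^-=\tfrac12\dim M_{\mathrm{dR}}(X,r)-\dim V_\alpha$ and therefore $\dim\mathbb{S}_\alpha=\dim V_\alpha+\tfrac12\dim M_{\mathrm{dR}}(X,r)=\dim V_\alpha+r^2(g-1)+1$. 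So the claim reduces to: $V_{\mathrm{oper}}$ is the unique smallest, and $N(X,r)$ the unique largest, fixed component.

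The core is then the estimate $\dim V_\alpha> g$ for every $\alpha\ne\mathrm{oper}$ (and dually $\dim V_\alpha<r^2(g-1)+1$ for every $\alpha\ne 0$). For the lower bound one uses that $\Pic^0(X)$ acts on each $V_\alpha$ by twisting with finite stabilizers at stable points, so $\dim V_\alpha\ge\dim\Pic^0(X)=g$; for the case $\alpha=0$ the inequality $r^2(g-1)+1>g$ is immediate. For the remaining strict inequality, realize $V_\alpha$ as the moduli of stable systems of Hodge bundles $(V=\bigoplus_{p=0}^{\ell}V^p,\theta)$ of a fixed type $(r_p,d_p)$, so that $\dim V_\alpha=\dim\mathbb{H}^1(\mathcal{A}^\bullet_\alpha)$ for the grading-preserving deformation complex $\mathcal{A}^\bullet_\alpha=\big[\bigoplus_p\End(V^p)\xrightarrow{[\theta,\,\cdot\,]}\bigoplus_p\Hom(V^p,V^{p-1}\otimes\Omega^1_X)\big]$; since $\mathbb{H}^0(\mathcal{A}^\bullet_\alpha)=\mathbb{C}$ by stability, $\dim V_\alpha\ge 1-\chi(\mathcal{A}^\bullet_\alpha)$, and Riemann--Roch gives $1-\chi(\mathcal{A}^\bullet_\alpha)=1+(g-1)\big(\sum_p r_p^2-\sum_p r_pr_{p-1}\big)+\sum_p\deg\Hom(V^p,V^{p-1}\otimes\Omega^1_X)$, where $\sum_p r_p^2-\sum_p r_pr_{p-1}=\tfrac12\big(r_0^2+r_\ell^2+\sum_{p\ge 1}(r_p-r_{p-1})^2\big)\ge 1$ with equality iff all $r_p=1$. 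When all $r_p=1$ each $\theta_p$ is a nonzero, hence injective, map of line bundles, so each $\deg\Hom(V^p,V^{p-1}\otimes\Omega^1_X)=d_{p-1}-d_p+2g-2\ge 0$, not all zero unless the type is uniformizing; hence $1-\chi(\mathcal{A}^\bullet_\alpha)\ge g+1$ for such $\alpha\ne\mathrm{oper}$. For types with some $r_p\ge 2$ (and $\alpha\ne 0$) one has $\sum_p r_p^2-\sum_p r_pr_{p-1}\ge 2$, and feeding in the stability constraints --- every $V^{\le k}$ and, whenever a Higgs map has a kernel, every corresponding $V^{\ge k}$ is $\theta$-invariant, which forces the partial degree sums $\sum_{q\le k}d_q$ to be negative for $0\le k<\ell$ --- one bounds $\sum_p\deg\Hom(V^p,V^{p-1}\otimes\Omega^1_X)$ from below well enough to again keep $1-\chi(\mathcal{A}^\bullet_\alpha)>g$. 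The maximal-stratum inequality follows from the same Riemann--Roch expression, or more cheaply from the irreducibility of $M_{\mathrm{dR}}(X,r)$, since two distinct strata cannot both be dense.

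Assembling: $\dim\mathbb{S}_\alpha=\dim V_\alpha+r^2(g-1)+1$ is \emph{strictly} minimized at $\alpha=\mathrm{oper}$, with value $r^2(g-1)+g+1$; then $\overline{\mathbb{S}_{\mathrm{oper}}}$, being irreducible, a union of strata, and admitting no stratum of strictly smaller dimension, equals $\mathbb{S}_{\mathrm{oper}}$ --- so the oper stratum is closed and is the unique stratum of minimal dimension. Symmetrically $\dim\mathbb{S}_\alpha$ is strictly maximized at $\alpha=0$, where it equals $\dim M_{\mathrm{dR}}(X,r)$, which by irreducibility of $M_{\mathrm{dR}}(X,r)$ forces $\mathbb{S}_0$ to be open and dense, hence the unique maximal stratum; and $\mathbb{S}_{\mathrm{oper}}$ being moreover the unique \emph{closed} stratum follows once one also shows that every stratum contains an oper in its closure (a degeneration argument carried out inside $\mathbb{M}_{\mathrm{Dol}}(X,r)$). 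I expect the main obstacle to be precisely the higher-rank part of the third step: making the strict inequality $\dim V_\alpha>g$ uniform requires a careful interplay between the stability constraints on the admissible types and the Riemann--Roch bookkeeping; a lesser, more routine technical point is to legitimize the Bialynicki-Birula dimension count on the non-complete space $\mathbb{M}_{\mathrm{Hod}}(X,r)$, for which Simpson's existence of limits together with smoothness of the stable locus suffice.
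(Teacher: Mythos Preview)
Your overall architecture matches the paper's: reduce to the fixed components $V_\alpha$, establish $\dim S_\alpha=\dim V_\alpha+\tfrac12\dim M_{\mathrm{dR}}(X,r)$ via the half-dimensionality of the Hodge fibers, and then bound $\dim V_\alpha$ from above and below. Two points of comparison are worth noting. For the upper bound the paper does not use your irreducibility shortcut or a Riemann--Roch count; it invokes the fact that every $V^{\overrightarrow{r},\overrightarrow{d}}$ sits inside the nilpotent cone, whose irreducible components are Lagrangian (Laumon, Faltings, Beilinson--Drinfeld, Ginzburg), together with Simpson's observation that a fixed component with nonzero Higgs field has strictly smaller dimension than the cone. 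For the lower bound, the paper handles the case where some $r_i\ge 2$ not by your deformation-complex estimate but by a stack argument: since $\mathcal{V}^{\overrightarrow{r},\overrightarrow{d}}$ dominates some $\mathrm{Bun}^{r_{i_0},d_{i_0}}(X)$ with $r_{i_0}\ge 2$, one gets $\dim V^{\overrightarrow{r},\overrightarrow{d}}\ge\max\{r_{i_0}^2(g-1)+1,\,2g-1\}>g$ in one line.

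This is exactly where your proposal has a genuine gap. Your sentence ``feeding in the stability constraints\ldots one bounds $\sum_p\deg\Hom(V^p,V^{p-1}\otimes\Omega^1_X)$ from below well enough'' is not a proof, and it is not clear that the Riemann--Roch expression $1-\chi(\mathcal{A}^\bullet_\alpha)$ alone suffices: the degree sum $\sum_p(r_p d_{p-1}-r_{p-1}d_p)$ can be negative, and the constraints you quote (with the sign reversed: semistability gives $\sum_{i\ge j}d_i<0$, hence $\sum_{i<j}d_i>0$) do not obviously compensate. You correctly flag this as the main obstacle, but you do not overcome it; the paper's stack argument bypasses the difficulty entirely.

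Finally, be aware that the paper proves only the minimal-dimension half of the conjecture (its Corollary~4.6); it does \emph{not} prove that the oper stratum is the unique closed stratum. Your deduction of closedness of $S_{\mathrm{oper}}$ from ``$\overline{S_{\mathrm{oper}}}$ is a union of strata'' presupposes a frontier condition that is precisely the content of the (open) Nestedness Conjecture; the paper instead cites Beilinson--Drinfeld for the closedness of $S_{\mathrm{oper}}$ alone. And your final parenthetical---that uniqueness of the closed stratum follows once one shows every stratum has an oper in its closure---is exactly the part of the conjecture that remains unproved; the ``degeneration argument inside $\mathbb{M}_{\mathrm{Dol}}$'' you allude to does not transfer to $\mathbb{M}_{\mathrm{dR}}$ without further work.
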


\begin{rmk}
For the first conjecture, the part of Lagrangian property is already known \cite{C,CS6} (see also Lemma \ref{halfdim}), the closedness of fibers is quite obvious, since these fibers are affine spaces by applying BB theory to Hodge moduli space (see Proposition 2.1 of \cite{H-H}, or Corollary 1.5 of \cite{C}) and the de Rham moduli space is an affine analytic variety. The whole foliation conjecture for the moduli space of rank two connections over four punctured projective line was proved by the authors in \cite{LSS}. As pointed out by Simpson, if this conjecture is affirmative, it could be useful for the context of geometric Langlands, where the full moduli stack of vector bundles might be replaced by the algebraic space    of leaves of foliations. What we should mention is that the analogous  statement of closedness on the side of the moduli space of Higgs bundles is not true. For example, one picks up  $u=(E_0,0)$ as a stable vector bundle $E_0$ of rank $r$ with trivial Higgs field, then $Y^0_u:=\{(E,\theta)\in M_{\mathrm{Dol}}(X,r): \lim\limits_{t\rightarrow 0}t\cdot(E,\theta)=u\}\simeq H^0(X,\End(E_0)\otimes \Omega^1_X)$, it is closed in $ M_{\mathrm{Dol}}(X,r)$  if and only if $E_0$ is very stable, i.e. there is  no non-zero nilpotent Higgs field on $E_0$ \cite{PP}. For the second conjecture, we only know very few, in \cite{CS6}, Simpson showed the nestedness conjecture for the case of rank two by a beautiful deformation theory argument, which in particular, implies the oper stratum conjecture. 
\end{rmk}

In this paper, we mainly focus on  the oper stratum conjecture. In particular, we will prove the following theorem which partially confirms Simpson's oper stratum conjecture (the part on dimension).

\begin{introtheorem}[= Corollary \ref{mainthm}] For the oper stratification $M_{\mathrm{dR}}(X,r)=\coprod_\alpha S_\alpha$, we have
\begin{enumerate}
  \item the open dense stratum $N(X,r)$ consisting of irreducible flat bundles such that the underlying vector bundles are stable is the unique maximal stratum with dimension $2r^2(g-1)+2$,
  \item the closed oper stratum $S_{\mathrm{oper}}$ is the unique minimal stratum with dimension $r^2(g-1)+g+1$.
\end{enumerate}
\end{introtheorem}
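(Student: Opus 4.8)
The plan is to reduce the entire statement to a dimension estimate for the connected components $V_\alpha$ of the $\C^*$-fixed locus $V(X,r)\subset \mathbb{M}_{\mathrm{Dol}}(X,r)$, which are the moduli spaces of stable systems of Hodge bundles --- equivalently, stable holomorphic chains $E_1\xrightarrow{\theta_1}E_2\xrightarrow{\theta_2}\cdots\xrightarrow{\theta_{\ell-1}}E_\ell$ --- of rank $r$ and degree $0$. From the Bialynicki--Birula structure on $\mathbb{M}_{\mathrm{Hod}}(X,r)$ together with the fact (Lemma~\ref{halfdim}, and the cited results of \cite{C,H-H}) that the projection $S_\alpha\to V_\alpha$ has affine-space fibers of dimension $\tfrac12\dim M_{\mathrm{dR}}(X,r)=r^2(g-1)+1$, I obtain the key identity $\dim S_\alpha=\dim V_\alpha+r^2(g-1)+1$ (surjectivity of $S_\alpha\to V_\alpha$ being automatic, since the flat bundle underlying a complex variation of Hodge structure in $V_\alpha$ degenerates under $t\to 0$ to that same variation). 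The theorem then becomes: among all components $V_\alpha$, the one $V_0=M^{\mathrm{s}}(X,r)$ consisting of stable bundles with zero Higgs field is the unique one of maximal dimension $r^2(g-1)+1$, and the oper component $V_{\mathrm{oper}}\cong\Pic^{(r-1)(g-1)}(X)$ is the unique one of minimal dimension $g$.

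For the maximal stratum I would argue on tangent spaces. At a general point $q=\big(\bigoplus_{i=1}^{\ell}E_i,\theta\big)\in V_\alpha$ the deformation space $T_q\mathbb{M}_{\mathrm{Dol}}(X,r)=\mathbb{H}^1\big(\mathcal{E}nd(E)\xrightarrow{[\theta,-]}\mathcal{E}nd(E)\otimes\Omega^1_X\big)$ carries a $\C^*$-weight decomposition $\bigoplus_w T_w$ with $T_0=T_qV_\alpha$; since the holomorphic symplectic form has weight $1$ one has $\dim T_w=\dim T_{1-w}$, and summing these equalities yields $\dim V_\alpha=r^2(g-1)+1-\sum_{w\le -1}\dim T_w$. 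When $\ell=1$ all weights vanish, so $\dim V_0=r^2(g-1)+1$. When $\ell\ge 2$, the extreme weight space $T_{-(\ell-1)}$ equals $H^1\big(\mathcal{H}om(E_1,E_\ell)\big)$ --- there is no $\theta$-term, the relevant weight space of $\mathcal{E}nd(E)\otimes\Omega^1_X$ vanishing --- while semistability of the chain forces $\deg E_\ell\le 0\le\deg E_1$ (the tails $E_k\oplus\cdots\oplus E_\ell$ being $\theta$-invariant), so $\chi\big(\mathcal{H}om(E_1,E_\ell)\big)=\big(\rk E_1\cdot\deg E_\ell-\rk E_\ell\cdot\deg E_1\big)+\rk E_1\,\rk E_\ell\,(1-g)<0$; hence $T_{-(\ell-1)}\ne 0$ and $\dim V_\alpha<\dim V_0$. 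Thus $V_0$ is the unique component of maximal dimension, so $\dim S_0=2r^2(g-1)+2=\dim M_{\mathrm{dR}}(X,r)$ and, $M_{\mathrm{dR}}(X,r)$ being irreducible, $S_0$ is the unique maximal (hence open dense) stratum; it contains $N(X,r)$ as a dense open subset, since a Simpson filtration whose associated graded has vanishing Higgs field must consist of $\nabla$-invariant subbundles and is therefore trivial for an irreducible flat bundle, forcing the underlying bundle to be polystable --- and stable generically.

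For the minimal stratum the key observation is that the determinant morphism $\det\colon V_\alpha\to\Pic^{0}(X)$, $(E_\bullet,\theta)\mapsto\bigotimes_i\det E_i$, is surjective: tensoring every $E_i$ by a line bundle $N$ preserves the type and multiplies $\det$ by $N^{\otimes r}$, while $[r]\colon\Pic^0(X)\to\Pic^0(X)$ is surjective. Hence $\dim V_\alpha\ge g$, with equality precisely when the generic fiber --- the moduli of stable systems of Hodge bundles of the given type and fixed determinant --- is finite. I would then show that this forces the oper type: stability makes every $\theta_i\ne 0$, so each $\mathcal{H}om(E_i,E_{i+1})\otimes\Omega^1_X$ carries a nonzero section and hence has non-negative degree; if some $E_i$ has rank $\ge 2$, or if all $E_i$ have rank $1$ but $\deg(E_i^\vee\otimes E_{i+1}\otimes\Omega^1_X)>0$ for some $i$, one constructs a positive-dimensional family of such fixed-determinant systems; the only remaining possibility is $\rk E_1=\cdots=\rk E_\ell=1$ with $E_{i+1}\cong E_i\otimes(\Omega^1_X)^{-1}$ and $\theta_i$ an isomorphism for every $i$ --- exactly the oper type ($\ell=r$, $E_i\cong L\otimes(\Omega^1_X)^{1-i}$, $\deg L=(r-1)(g-1)$), for which fixing $\det E$ determines $L$ up to $r$-torsion and the fiber is indeed finite. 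Therefore $\dim V_\alpha\ge g$ with equality iff $\alpha$ is the oper type, so every stratum other than $S_{\mathrm{oper}}$ has dimension strictly greater than $r^2(g-1)+g+1=\dim S_{\mathrm{oper}}$, and $S_{\mathrm{oper}}$ (known moreover to be closed) is the unique stratum of minimal dimension.

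The hard part will be the converse in the minimality statement: showing that for \emph{every} non-oper type which supports a stable system of Hodge bundles, the fixed-determinant moduli is genuinely positive-dimensional. This requires a concrete analysis of holomorphic chains --- excluding rigidity of a rank $\ge 2$ link, and of a rank-$1$ link of positive defect $\deg(E_i^\vee\otimes E_{i+1}\otimes\Omega^1_X)>0$ --- while controlling the interplay between the semistability inequalities on the degree vector and the obstruction space $\mathbb{H}^2$ of the fixed-determinant deformation complex, so that the virtual dimension
\[
(g-1)\Big(\sum_i(\rk E_i)^2+\sum_{i<\ell}\rk E_i\cdot\rk E_{i+1}-1\Big)+\sum_{i<\ell}\big(\rk E_i\cdot\deg E_{i+1}-\rk E_{i+1}\cdot\deg E_i\big)
\]
is not cancelled by an unexpected jump in $\mathbb{H}^2$. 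A comparatively routine point is the density of $N(X,r)$ in $M_{\mathrm{dR}}(X,r)$, i.e.\ that a generic irreducible flat bundle has stable underlying bundle.
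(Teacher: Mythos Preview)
Your reduction to the bounds $g\le\dim_{\C}V_\alpha\le r^2(g-1)+1$ via the half-dimensional fibers of $S_\alpha\to V_\alpha$ is exactly the paper's strategy (Lemma~\ref{halfdim} together with Corollary~\ref{mainthm}). For the \emph{upper} bound and its strictness when $\ell\ge 2$, your weight-space argument is correct and in fact more self-contained than the paper's: you exhibit a nonzero negative-weight summand $T_{-(\ell-1)}=H^1(\Hom(E_1,E_\ell))$ directly by Riemann--Roch, whereas the paper invokes the Lagrangian property of the irreducible components of the nilpotent cone (Laumon, Faltings, Beilinson--Drinfeld, Ginzburg) together with Simpson's Lemma~11.9 in \cite{CSaa} to conclude that any fixed-point component supporting a nonzero Higgs field has dimension strictly below $r^2(g-1)+1$.

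For the \emph{lower} bound your determinant-fiber approach is genuinely different and, as you rightly flag, incomplete at the ``hard part''. The paper bypasses the fiber analysis entirely. For types $\overrightarrow r=(1,\dots,1)$ it first observes that every moduli point of $V^{\overrightarrow r,\overrightarrow d}$ is already stable (a strictly polystable representative would split along an upward-closed subset of indices, forcing some $\theta_i=0$), so the closed dimension formula of \cite{A,BGG} applies: $\dim V^{\overrightarrow r,\overrightarrow d}=(g-1)(2r-1)+(d_r-d_1)+1$; plugging in the chain inequality $d_i-d_{i+1}\le 2g-2$ from Proposition~\ref{1} (exactly your ``defect $\ge 0$'') gives $\dim\ge g$ immediately, with equality iff every defect vanishes --- which is the uniformizing type. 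For types with some $r_{i_0}\ge 2$ the paper argues at the stack level that $\mathcal V^{\overrightarrow r,\overrightarrow d}$ contains a copy of $\mathrm{Bun}^{r_{i_0},d_{i_0}}(X)$, whence $\dim V^{\overrightarrow r,\overrightarrow d}\ge r_{i_0}^2(g-1)+1>g$. So the paper trades your geometric picture for two off-the-shelf inputs --- the chain-moduli dimension formula and the semistability constraints of \cite{PH} --- and thereby avoids having to control $h^0$-jumps and $\mathbb{H}^2$-obstructions in the fixed-determinant deformation complex. Your route would reach the same conclusion, but completing it amounts to re-deriving those two inputs by hand; in particular your ``some $r_i\ge 2$'' case is the paper's stack argument in disguise, and your rank-$(1,\dots,1)$ positive-defect case is precisely what the dimension formula handles in one line.
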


The paper is organized as follows. In the following section, we collect some basic materials on  the theory of $\lambda$-flat bundles, and holomorphic chains that can be used to describe the $\mathbb{C}^*$-fixed points (i.e. $\mathbb{C}$-VHSs).  In the third section, we introduce the Simpson filtrations of flat bundles and the oper stratification of de Rham moduli space, we also give an explicit description of the Simpson filtrations on flat bundles of rank 3, as well as an upper bound on the degree of subbundles. In the last section, we will give the proof of our main theorem, which reduces to the study of the connected components $V_\alpha$ since each stratum $S_\alpha$ is a fibration over $V_\alpha$ with fibers as Lagrangian submanifolds of $M_{\mathrm{dR}}(X,r)$.

\bigskip

\noindent\textbf{Acknowledgements}. 
The authors would like to express their deep gratitude to Prof. Brian Collier, Prof. Peter Gothen, Prof. Jochen Heinloth  and Prof. Richard Wentworth for communications on various occasions, and  to the anonymous referee for many valuable suggestions. The author P. Huang would like to thank Prof. Carlos Simpson for  kind help and useful discussions, and thank  Prof. Jiayu Li for his continuous encouragement.

\section{Preliminaries}

\subsection{Flat $\lambda$-Connections}

The notion of flat $\lambda$-connection as the interpolation of usual flat connection and Higgs field was suggested by Deligne \cite{D}, illustrated by Simpson in \cite{CS4} and further  studied in \cite{CS5,CS6}. We recall the definition here.

Throughout the paper, $X$ is always assumed to be a compact Riemann surface of genus $g\geq 2$, and let $K_X=\Omega^1_X$ be the canonical line bundle over $X$.

\begin{definition} [\cite{CS4}] 
Assume $\lambda\in\mathbb{C}$.
\begin{enumerate}
  \item Let $E$ be a holomorphic vector bundle over $X$, a \emph{$\lambda$-connection} on $E$ is a $\mathbb{C}$-linear operator $D^\lambda: E\to E\otimes\Omega_X^{1}$ that satisfies the following $\lambda$-twisted Leibniz rule:
$$
D^\lambda(fs)=fD^\lambda s+\lambda s\otimes  df,
$$
where $f$ and $s$ are holomorphic sections of $\mathcal{O}_X$ and $E$, respectively. If $D^\lambda\circ D^\lambda=0$ under the natural extension $D^\lambda: E\otimes\Omega_X^{p}\to E\otimes\Omega_X^{p+1}$ for any integer $p\geq0$, we call $D^\lambda$ a \emph{flat $\lambda$-connection}, and the pair $(E,D^\lambda)$ is called a  \emph{$\lambda$-flat bundle}.
\item A  $\lambda$-flat bundle $(E,D^\lambda)$ over $X$ is called \emph{stable} (resp. \emph{semistable}) if for any $\lambda$-flat subbundle $(F,D^\lambda|_F)$ of $0<\rank(F)<\rank(E)$, we have the following inequality
$$
\mu(F)< (\text{resp.} \leq) \,\mu(F),
$$
where $\mu(\bullet)=\frac{\deg(\bullet)}{\rank(\bullet)}$ denotes the \emph{slope} of bundle. And we call $(E,D^\lambda)$ is \emph{polystable} if it decomposes as a direct sum of stable $\lambda$-flat bundles with the same slope.
\end{enumerate}
\end{definition}

Let $\mathcal{M}_{\mathrm{Hod}}(X,r)$ be the moduli stack of
$\lambda$-flat bundles (varying $\lambda\in\mathbb{C}$) over $X$ of rank $r$ with vanishing first Chern class,  and let  $\mathbb{M}_{\mathrm{Hod}}(X,r)$ be the coarse moduli space of semistable $\lambda$-flat bundles of this stack, called the \emph{Hodge moduli space}. It's known that  $\mathbb{M}_{\mathrm{Hod}}(X,r)$ is a quasi-projective variety and parameterizes the isomorphism classes of  polystable $\lambda$-flat bundles \cite{CS4}, let $M_{\mathrm{Hod}}(X,r)$ be the  smooth locus of $\mathbb{M}_{\mathrm{Hod}}(X,r)$, which is  a Zariski dense open subset  and parameterizes the isomorphism classes of  stable $\lambda$-flat bundles. There is a natural fibration $\pi: \mathbb{M}_{\mathrm{Hod}}(X,r)\rightarrow\mathbb{C}, (E,D^\lambda,\lambda)\mapsto\lambda$ such that the fiber $\pi^{-1}(\lambda)=:\mathbb{M}_{\mathrm{Hod}}^\lambda(X,r)$ is the coarse moduli space of semistable $\lambda$-flat bundles over $X$ of rank $r$ with vanishing first Chern class, in particular, 
\begin{itemize}
\item[$\bullet$] $\pi^{-1}(1)=\mathbb{M}_{\mathrm{dR}}(X,r)$, the coarse moduli space of flat bundles over $X$ of rank $r$, called the \emph{de Rham moduli space}, which is algebraically isomorphic to $\mathbb{M}_{\mathrm{dR}}(X,r)$ for any $\lambda\neq0$;
\item[$\bullet$] $\pi^{-1}(0)=\mathbb{M}_{\mathrm{Dol}}(X,r)$,  the coarse moduli space of semistable Higgs bundles over $X$ of rank $r$ with vanishing first Chern class, called the \emph{Dolbeault moduli space}.
\end{itemize}

The natural $\mathbb{C}^*$-action on $\mathbb{M}_{\mathrm{Dol}}(X,r)$ via $t\cdot(E,\theta):=(E,t\theta)$ can be generalized to the $\mathbb{C}^*$-action on $\mathbb{M}_{\mathrm{Hod}}(X,r)$ via $t\cdot (E,D^\lambda,\lambda) := (E,tD^\lambda,t\lambda)$. As this action change the fibers over $\lambda\neq0$, therefore, all the fixed points must lie in the fiber over 0, that is, in $\mathbb{M}_{\mathrm{Dol}}(X,r)$. It is known that a such $\mathbb{C}^*$-fixed point is a polystable Higgs bundle that has a structure of system of Hodge bundles (\cite[Lemma 4.1]{CS3}), along the phraseology of \cite{C}, we will call it a complex variation of Hodge structure (briefly as $\mathbb{C}$-VHS). For the Dolbeault moduli space $\mathbb{M}_{\mathrm{Dol}}(X,r)$, the properness and the $\mathbb{C}^*$-equivariant property of the Hitchin map preserve that, for each Higgs bundle $(E,\theta)$, the limit point $\lim\limits_{t\to0}t\cdot(E,\theta)$  exists as a $\mathbb{C}$-VHS.  There is no analogue of Hitchin map for the Hodge moduli space $\mathbb{M}_{\mathrm{Hod}}(X,r)$, however in \cite{CS6}, Simpson showed that for the $\mathbb{C}^*$-action on $\mathbb{M}_{\mathrm{Hod}}(X,r)$, the limit point $\lim\limits_{t\rightarrow 0}t\cdot (E,D^\lambda,\lambda)$ also exists as a $\mathbb{C}$-VHS (see Section \ref{sec3}).

\subsection{Chains and  $\mathbb{C}$-VHSs}

A $\mathbb{C}$-VHS has the form $(E=\bigoplus_{i=1}^lE_i,\theta=\bigoplus_{i=1}^{l-1}\theta_i)$ with each $\theta_i: E_i\to E_{i+1}\otimes K_X$, $\rank(E_i)=r_i$, and $\deg(E_i)=d_i$. The pair $(\overrightarrow{r},\overrightarrow{d}):=(r_1,\cdots,r_l; d_1,\cdots,d_l)$ is called the \emph{type} of the $\mathbb{C}$-VHS. This can be characterized by a chain of holomorphic bundles with certain stability parameter.

 \begin{definition}[\cite{A,B}] 
 \mbox{}
  \begin{enumerate}
   \item  Let $\overrightarrow{r}=(r_1,\cdots, r_l), \overrightarrow{d}=(d_1,\cdots,d_l)$ and $|\overrightarrow{r}|=\sum_{i=1}^lr_i, |\overrightarrow{d}|=\sum_{i=1}^ld_i$. A \emph{chain} $Ch_{\overrightarrow{r},\overrightarrow{d}}$ of length $l$ is a tuple $$(\mathcal{E}_i,i=1,\cdots,l; \varphi_i, i=1,\cdots i-1),$$ consisting of holomorphic bundles $\mathcal{E}_i$ on $X$ with $\rank(\mathcal{E}_i)=r_i, \deg(\mathcal{E}_i)=d_i (i=1,\cdots,l)$, and holomorphic morphisms $\varphi_i:\mathcal{E}_i\rightarrow \mathcal{E}_{i+1} (i=1,\cdots,l-1)$. We write a chain as
  $$
  Ch_{\overrightarrow{r},\overrightarrow{d}}:\mathcal{E}_1\xrightarrow{\varphi_1}\mathcal{E}_2\xrightarrow{\varphi_2}\cdots\xrightarrow{\varphi_{l-1}}\mathcal{E}_l,
  $$ 
  the pair $(\overrightarrow{r},\overrightarrow{d})$ is called the \emph{type} of the chain. If each $\varphi_i$ dose not vanish, the chain is called \emph{indecomposable}.
   \item Let $\overrightarrow{{\alpha}}=(\alpha_1,\cdots,\alpha_l)\in\mathbb{R}^l$, and call it a \emph{stability parameter}. For a chain $Ch_{\overrightarrow{r},\overrightarrow{d}}$, we introduce its \emph{$\overrightarrow{\alpha}$-slope} as
 $$
 \mu_{\overrightarrow{\alpha}}(Ch_{\overrightarrow{r},\overrightarrow{d}})=\frac{|\overrightarrow{d}|+\sum_{i=1}^l\alpha_ir_i}{|\overrightarrow{r}|}.
 $$
 A chain $Ch_{\overrightarrow{r},\overrightarrow{d}}$ is called \emph{$\overrightarrow{\alpha}$-stable} (resp., \emph{$\overrightarrow{\alpha}$-semistable}) if for any non-zero proper subchain $Ch_{\overrightarrow{r^\prime},\overrightarrow{d^\prime}}$ we have
 $$
 \mu_{\overrightarrow{\alpha}}(Ch_{\overrightarrow{r^\prime},\overrightarrow{d^\prime}})<(\mathrm{resp.},\ \leq)\ \mu_{\overrightarrow{\alpha}}(Ch_{\overrightarrow{r},\overrightarrow{d}}).
 $$
  A chain $Ch_{\overrightarrow{r},\overrightarrow{d}}$ is called \emph{$\overrightarrow{\alpha}$-polystable} if it can be decomposed into the direct sum of two subchains $Ch_{\overrightarrow{r},\overrightarrow{d}}=Ch_{\overrightarrow{r_1},\overrightarrow{d_1}}\bigoplus Ch_{\overrightarrow{r_2},\overrightarrow{d_2}}$ such that $Ch_{\overrightarrow{r_i},\overrightarrow{d_i}} (i=1,2)$ is $\overrightarrow{\alpha_i}$-stable with $\mu_{\overrightarrow{\alpha_i}}(Ch_{\overrightarrow{r_i},\overrightarrow{d_i}})=\mu_{\overrightarrow{\alpha}}(Ch_{\overrightarrow{r},\overrightarrow{d}})$, where $\overrightarrow{\alpha_i}=\overrightarrow{\alpha}|_{Ch_{\overrightarrow{r_i},\overrightarrow{d_i}}}$.
 \end{enumerate}
 \end{definition}

In \cite{PH}, the authors considered the necessary conditions for the existence of $\overrightarrow{\alpha}$-semistable chains:

\begin{proposition} [\cite{PH}]\label{1}
Let $Ch_{\overrightarrow{r},\overrightarrow{d}}$ be a  $\overrightarrow{{\alpha}}$-semistable chain of length $l$, where $\overrightarrow{{\alpha}}=(\alpha_1,\cdots,\alpha_l)$ is a  stability parameter satisfying $\alpha_1>\cdots>\alpha_l$, and let $\mu=\mu_{\overrightarrow{{\alpha}}}(Ch_{\overrightarrow{r},\overrightarrow{d}})$, then
\begin{enumerate}
  \item for all $j\in{2,\cdots,l}$, we have$$\frac{\sum_{i=j}^l(d_i+\alpha_i r_i)}{\sum_{i=j}^lr_i}\leq \mu;$$
  \item for all $j$ such that $r_j=r_{j+1}$, we have $$d_j\leq d_{j+1};$$
  \item for all $1\leq k<j\leq l$ such that $r_k<\min\{r_{k+1},\cdots, r_j\}$, we have
  $$
  \frac{\sum_{i\notin[k,j]}(d_i+\alpha_ir_i)+(j-k+1)d_k+(\sum_{i=k}^j\alpha_i)r_k}{\sum_{i\notin[k,j]}r_i+(j-k+1)r_k}\leq \mu;
  $$
  \item for all $1\leq k<j\leq l$ such that $r_k>\max\{r_{k+1},\cdots, r_j\}$, we have
  $$
  \frac{\sum_{i=k}^{j-1}(d_i-d_j+\alpha_i(r_i-r_j))}{\sum_{i=k}^{j-1}(r_i-r_j)}\leq \mu.
  $$
\end{enumerate}
\end{proposition}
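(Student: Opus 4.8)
The plan is to obtain each of the four inequalities by testing the $\overrightarrow{\alpha}$-semistability of $Ch_{\overrightarrow{r},\overrightarrow{d}}$ against a single, carefully chosen subchain or quotient chain, and then to control the degrees of the subsheaves that occur there by passing to saturations. Recall that $\overrightarrow{\alpha}$-semistability says precisely that every nonzero proper subchain has $\overrightarrow{\alpha}$-slope $\leq\mu$, and equivalently — since ranks, degrees and $\alpha$-weighted ranks are additive in a short exact sequence of chains — that every nonzero proper quotient chain has $\overrightarrow{\alpha}$-slope $\geq\mu$. So the whole game is to recognise the right-hand side of each estimate as the $\overrightarrow{\alpha}$-slope of, or a one-sided bound for the $\overrightarrow{\alpha}$-slope of, such a chain.

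Part (1) is the model case: the ``tail'' $\mathcal{E}_j\xrightarrow{\varphi_j}\mathcal{E}_{j+1}\to\cdots\to\mathcal{E}_l$, with zeros placed in positions $1,\dots,j-1$, is a subchain because $\varphi_{j-1}(0)\subseteq\mathcal{E}_j$, and its $\overrightarrow{\alpha}$-slope is exactly $\frac{\sum_{i=j}^{l}(d_i+\alpha_ir_i)}{\sum_{i=j}^{l}r_i}$. For part (2), write $s=r_j=r_{j+1}$ and let $t$ be the generic rank of $\varphi_j$. If $t=s$, then $\varphi_j$ is an injective sheaf morphism with torsion cokernel and $d_{j+1}-d_j=\mathrm{length}(\coker\varphi_j)\geq 0$. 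If $t<s$, I would test semistability against two one-slot pieces: the subchain with $\ker\varphi_j\subseteq\mathcal{E}_j$ in position $j$ (a subchain since $\varphi_j$ kills its kernel), which gives $\deg\ker\varphi_j\leq(\mu-\alpha_j)(s-t)$, and the quotient chain with $\mathcal{E}_{j+1}/I$ in position $j+1$, where $I\subseteq\mathcal{E}_{j+1}$ is the saturation of $\mathrm{im}\,\varphi_j$, which gives $d_{j+1}-\deg I\geq(\mu-\alpha_{j+1})(s-t)$. Combining these with $\deg I\geq\deg\mathrm{im}\,\varphi_j=d_j-\deg\ker\varphi_j$ yields $d_{j+1}\geq d_j+(s-t)(\alpha_j-\alpha_{j+1})>d_j$, using $\alpha_j>\alpha_{j+1}$.

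Parts (3) and (4) are where the rank hypotheses do the work. For (3), where $r_k<\min\{r_{k+1},\dots,r_j\}$, I would build the subchain $S$ with $S_i=\mathcal{E}_i$ for $i\notin[k,j]$, $S_k=\mathcal{E}_k$, and, for $k<i\leq j$, $S_i\subseteq\mathcal{E}_i$ a subbundle of rank $r_k$ of maximal degree containing $\varphi_{i-1}(S_{i-1})$ — such a subbundle exists precisely because $r_k<r_i$, and this also makes $S$ a proper subchain. The key point is that one can then arrange $\deg S_i\geq d_k$ for all $i\in[k,j]$: if $\varphi_{i-1}|_{S_{i-1}}$ is generically injective this is immediate from $\deg S_i\geq\deg(\text{saturation of }\varphi_{i-1}(S_{i-1}))\geq\deg S_{i-1}$, while if it is not, its kernel (a subchain in position $i-1$) has degree $\leq(\mu-\alpha_{i-1})\cdot(\text{its rank})$, and since $\alpha_{i-1}>\alpha_i$ this compensates the degree lost in extending back up to rank $r_k$. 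Plugging $\deg S_i\geq d_k$ into $\mu_{\overrightarrow{\alpha}}(S)\leq\mu$ gives the inequality of (3). Part (4), where $r_k>\max\{r_{k+1},\dots,r_j\}$, is the mirror situation: here the rank condition instead guarantees that one can propagate a rank-$r_j$ subbundle of $\mathcal{E}_j$ backwards through the positions $k,\dots,j-1$ (equivalently, run the construction of (3) on the dual chain $\mathcal{E}_l^{\vee}\to\cdots\to\mathcal{E}_1^{\vee}$, under which quotient chains of the original become subchains), and the same saturation-and-$\alpha$-monotonicity bookkeeping produces the stated bound involving the differences $d_i-d_j$ and $r_i-r_j$.

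The step I expect to be the main obstacle is exactly this degree bookkeeping in (3) and (4). The ``image'' chains one writes down first have degrees that are uncontrolled as soon as some $\varphi_i$ drops rank, so one is forced to interleave the principal semistability estimate with auxiliary semistability estimates for the small kernel or cokernel subchains sitting at individual positions; checking that the resulting correction terms all have the favourable sign — which is where the strict ordering $\alpha_1>\cdots>\alpha_l$ is used in an essential way — and that they telescope correctly over the interval $[k,j]$, is the delicate part. The conditions $r_k<\min\{r_{k+1},\dots,r_j\}$ and $r_k>\max\{r_{k+1},\dots,r_j\}$ are precisely what ensure that at each step the subbundle required has an admissible rank, so that the propagation never stalls.
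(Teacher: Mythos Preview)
The paper does not prove Proposition~\ref{1} at all; it is quoted verbatim from Garc\'ia-Prada--Heinloth \cite{PH} and used as a black box, so there is no in-paper argument to compare your proposal against. That said, your sketch for parts~(1) and~(2) is correct and is the standard argument: the tail subchain gives~(1) immediately, and your kernel/cokernel dichotomy for~(2), combining the single-slot subchain $\ker\varphi_j$ at position~$j$ with the single-slot quotient $\mathcal{E}_{j+1}/I$ at position~$j+1$, cleanly yields $d_{j+1}-d_j\geq (s-t)(\alpha_j-\alpha_{j+1})\geq 0$.

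For parts~(3) and~(4) your strategy --- propagate a rank-$r_k$ (resp.\ rank-$r_j$) piece through the interval $[k,j]$ via saturated images and test semistability against the resulting subchain --- is the right shape, and you correctly identify that the rank hypotheses are exactly what allow the propagation. The gap is in the ``compensation'' step when some $\varphi_{i-1}|_{S_{i-1}}$ has a nontrivial kernel~$K$ of rank~$q$. You bound $\deg K\leq(\mu-\alpha_{i-1})q$ correctly, but then you need to \emph{extend} the rank-$(r_k-q)$ saturated image $\tilde I\subseteq\mathcal{E}_i$ back up to a rank-$r_k$ subbundle $S_i$ with $\deg(S_i/\tilde I)$ bounded \emph{below}. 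You suggest this lower bound comes from semistability applied to the quotient $\mathcal{E}_i/\tilde I$, but that quotient is not a quotient \emph{chain} of $Ch_{\overrightarrow{r},\overrightarrow{d}}$: the corresponding subobject would need $\varphi_{i-1}(\mathcal{E}_{i-1})\subseteq\tilde I$, whereas you only know $\varphi_{i-1}(S_{i-1})\subseteq\tilde I$. So semistability gives you no control over subbundles of $\mathcal{E}_i/\tilde I$, and without that control there is no reason the desired extension exists with the claimed degree. The telescoping you anticipate therefore does not get off the ground. One way to repair this is to abandon the insistence on constant rank~$r_k$ and instead carry along the full (possibly rank-decreasing) chain of saturated images, tracking both the kernel subchains and the cokernel quotient chains at each step; the strict ordering $\alpha_1>\cdots>\alpha_l$ then enters to show that the cumulative correction has the right sign, but the bookkeeping is genuinely more involved than what you have written.
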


Given a $\mathbb{C}$-VHS of the form $(E=\bigoplus_{i=1}^lE_i,\theta=\bigoplus_{i=1}^{l-1}\theta_i)$ of type $(\overrightarrow{r},\overrightarrow{d}):=(r_1,\cdots,r_l; d_1,\cdots,d_l)$, we can obtain a chain with a parameter $\delta\in\mathbb{Z}$ as
 $$
 Ch_{\overrightarrow{r},\overrightarrow{d'}}:\mathcal{{E}}_1\xrightarrow{\varphi_1}\mathcal{{E}}_2\xrightarrow{\varphi_2}\cdots\xrightarrow{\varphi_{l-1}}\mathcal{{E}}_l,
 $$
 where each $\mathcal{{E}}_i={E}_i\otimes K_X^{-(l-i+\delta)}$, $\varphi_i=\theta_i\otimes \mathrm{Id}$ and $d'_i=d_i-r_i(l-i+\delta)(2g-2)$. We assign each $\mathcal{E}_i$ an integer 
 $$
 \alpha_i=(l-i+\delta)(2g-2)
 $$ 
 to form a  stability parameter $\overrightarrow{{\alpha}}_{\mathrm{Higgs}}$, then $Ch_{\overrightarrow{r},\overrightarrow{d'}}$ is 
 $\overrightarrow{{\alpha}}_{\mathrm{Higgs}}$-semistable.

\begin{proposition}\label{11}
Denote by $V(X,r)$ the set of all $\mathbb{C}$-VHSs lying in $\mathbb{M}_{\mathrm{Dol}}(X,r)$. 
Let $V^{\overrightarrow{r},\overrightarrow{d}}\subset V(X,r)$ be the subset that consists of $\mathbb{C}$-VHSs of type $(\overrightarrow{r},\overrightarrow{d})$ with each component $\theta_i$ of the Higgs field $\theta$ non-zero. If $V^{\overrightarrow{r},\overrightarrow{d}}$ is non-empty,  then $(\overrightarrow{r},\overrightarrow{d})$ should satisfy the following conditions:
\begin{enumerate}
  \item for all $1<j\leq l$, we have  
  \begin{align}\label{2.1}
  \sum_{i=j}^ld_i<0,
  \end{align}
  \item for all $j$ such that $r_j=r_{j+1}$, we have
     \begin{align}\label{2.2}
     \frac{d_j}{r_j}-\frac{d_{j+1}}{r_{j+1}}\leq 2g-2,
     \end{align}  
  \item for all $1\leq k<j\leq l$ such that $r_k<\min\{r_{k+1},\cdots, r_j\}$, we have
  \begin{align}\label{2.4.3}
  -\sum_{i=k+1}^jd_i+(j-k)(d_k-(j-k+1)(g-1)r_k)\leq0,
  \end{align}
  \item for all $1\leq k<j\leq l$ such that $r_k>\max\{r_{k+1},\cdots, r_j\}$, we have
  \begin{align}\label{2.4}
  \sum_{i=k}^{j-1}d_i-(j-k)(d_j+(j-k+1)(g-1)r_j)\leq 0,
  \end{align}
\end{enumerate}
Conversely, if the type $(\overrightarrow{r},\overrightarrow{d})$ satisfies the conditions \eqref{2.1}-\eqref{2.4}, then $V^{\overrightarrow{r},\overrightarrow{d}}$ is non-empty.
\end{proposition}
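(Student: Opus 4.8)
The plan is to translate the statement into the language of holomorphic chains. For the forward implication, let $(E=\bigoplus_{i=1}^lE_i,\theta=\bigoplus_{i=1}^{l-1}\theta_i)\in V^{\overrightarrow{r},\overrightarrow{d}}$. As a point of $\mathbb{M}_{\mathrm{Dol}}(X,r)$ it has vanishing first Chern class, so $|\overrightarrow{d}|=0$ and $\mu(E)=0$; attach to it, as explained just before the statement, the chain $Ch_{\overrightarrow{r},\overrightarrow{d'}}$ with $d_i'=d_i-r_i(l-i+\delta)(2g-2)$ and the stability parameter $\overrightarrow{\alpha}_{\mathrm{Higgs}}$ with $\alpha_i=(l-i+\delta)(2g-2)$. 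This chain is $\overrightarrow{\alpha}_{\mathrm{Higgs}}$-semistable and the $\alpha_i$ are strictly decreasing, so Proposition~\ref{1} applies; since $d_i'+\alpha_ir_i=d_i$ for every $i$, its $\overrightarrow{\alpha}_{\mathrm{Higgs}}$-slope equals $|\overrightarrow{d}|/|\overrightarrow{r}|=0$. Substituting $d_i'+\alpha_ir_i=d_i$ into the four inequalities of Proposition~\ref{1}, together with $\sum_{i=1}^ld_i=0$ and the telescoping identity $\sum_{i=k}^j\alpha_i-(j-k+1)\alpha_k=-(g-1)(j-k)(j-k+1)$ (which is what produces the $(g-1)$-terms), turns them into, respectively, $\sum_{i=j}^ld_i\le 0$, inequality~\eqref{2.2}, inequality~\eqref{2.4.3}, and inequality~\eqref{2.4}. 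The only remaining task in this direction is to upgrade $\sum_{i=j}^ld_i\le 0$ to the strict inequality~\eqref{2.1}.

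Suppose $\sum_{i=j}^ld_i=0$ for some $j$ with $1<j\le l$. Then $G:=\bigoplus_{i=j}^lE_i$ is a $\theta$-invariant subbundle with $\mu(G)=0=\mu(E)$, and it is $\mathbb{C}^*$-invariant for the natural action on systems of Hodge bundles; by polystability of $(E,\theta)$ one may choose a $\mathbb{C}^*$-invariant Higgs complement $E=G\oplus G'$, and being graded it must equal $G'=\bigoplus_{i=1}^{j-1}E_i$. Since $G'$ is $\theta$-invariant, $\theta_{j-1}(E_{j-1})\subseteq(E_j\otimes K_X)\cap(G'\otimes K_X)=0$, contradicting the assumption that every $\theta_i$ is non-zero. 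Hence \eqref{2.1} holds, which completes the forward direction.

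For the converse, assume $(\overrightarrow{r},\overrightarrow{d})$ with $|\overrightarrow{d}|=0$ satisfies \eqref{2.1}--\eqref{2.4}; reversing the chain dictionary, it is enough to construct an $\overrightarrow{\alpha}_{\mathrm{Higgs}}$-polystable chain $\mathcal{E}_1\xrightarrow{\varphi_1}\cdots\xrightarrow{\varphi_{l-1}}\mathcal{E}_l$ of type $(\overrightarrow{r},\overrightarrow{d'})$ with every $\varphi_i\neq 0$ (tensoring back by powers of $K_X$ then yields the desired $\mathbb{C}$-VHS). One approach is an explicit construction: choose each $\mathcal{E}_i$ with enough care that $\Hom(\mathcal{E}_i,\mathcal{E}_{i+1})\neq 0$ --- this cannot be arranged with the $\mathcal{E}_i$ fully generic, since $\chi(\mathcal{E}_i^\vee\otimes\mathcal{E}_{i+1})$ may be negative, but \eqref{2.2}--\eqref{2.4} guarantee that suitable (non-generic) $\mathcal{E}_i$ carrying a non-zero map exist --- and then take $\varphi_i$ generic among the non-zero homomorphisms. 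Semistability then needs to be tested only on graded subchains $\bigoplus F_i$ with $F_i\subseteq\mathcal{E}_i$ and $\varphi_i(F_i)\subseteq F_{i+1}$ (the maximal destabilizing subobject is canonical, hence $\mathbb{C}^*$-invariant, hence graded); with the $\mathcal{E}_i$ chosen generic subject to the $\Hom$-constraints, classical bounds on subbundles of general stable bundles reduce the subchains that could violate the slope inequality to a short list of extremal ones --- the ``tails'' $\bigoplus_{i\ge k}\mathcal{E}_i$ together with the rank-increase and rank-decrease configurations underlying parts (3) and (4) of Proposition~\ref{1} --- each of which is excluded precisely by one of \eqref{2.1}--\eqref{2.4}. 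For generic admissible choices one obtains a stable chain, hence a stable $\mathbb{C}$-VHS lying in $V^{\overrightarrow{r},\overrightarrow{d}}$.

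The forward direction is essentially bookkeeping once Proposition~\ref{1} and the chain correspondence are granted. The main obstacle is the converse: showing that \eqref{2.1}--\eqref{2.4} are not merely necessary but also sufficient, i.e.\ that for a suitably chosen chain of the prescribed type no subchain destabilizes. The two delicate ingredients are (i) choosing the $\mathcal{E}_i$ so that they simultaneously support the non-zero maps $\varphi_i$ and remain generic enough for sharp subbundle estimates, and (ii) the combinatorial verification that the finitely many extremal subchains one must test are exhausted by the four families of Proposition~\ref{1}, so that \eqref{2.1}--\eqref{2.4} rule out all of them. An alternative to (ii) would be to invoke a converse to Proposition~\ref{1} --- a sufficiency criterion for the existence of $\overrightarrow{\alpha}$-semistable chains with non-vanishing maps --- should it be available in a form matching \cite{PH}.
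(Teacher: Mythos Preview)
Your forward direction follows the paper's approach (translate to chains, apply Proposition~\ref{1}), and is in fact more careful: the paper simply says the four inequalities drop out of Proposition~\ref{1} and does not explain why \eqref{2.1} is \emph{strict}, whereas you supply the polystability/complement argument that does this.

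The divergence is in the converse. The paper does not attempt any direct construction; it invokes the result of Bradlow--Garc\'ia-Prada--Gothen--Heinloth \cite{BGG}, which is precisely the ``converse to Proposition~\ref{1}'' you float as an alternative in your last sentence: conditions \eqref{2.1}--\eqref{2.4} are exactly the sufficient conditions established there for the existence of an $\overrightarrow{\alpha}_{\mathrm{Higgs}}$-semistable chain of the prescribed type. Given such a chain, the paper passes to the associated graded of its Jordan--H\"older filtration to obtain a polystable representative (hence an actual point of the coarse moduli space $V(X,r)$), and then notes that the strict inequality \eqref{2.1} forces every $\theta_i\neq 0$ --- this last step is your forward-direction strictness argument run in reverse: if some $\theta_{j-1}$ vanished, the tail $\bigoplus_{i\ge j}E_i$ and its complement would both be sub-Higgs bundles of slope $\le 0$ summing to zero, contradicting \eqref{2.1}.

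Your explicit-construction route is therefore unnecessary, and as you yourself acknowledge, it is not completed: the assertion that only the four extremal families of subchains need to be tested is essentially the content of the theorem in \cite{BGG}, so carrying out your ``delicate ingredient (ii)'' would amount to reproving that result from scratch. Replace the construction sketch by the citation to \cite{BGG}, add the semisimplification step to land in the coarse moduli space, and the converse is immediate.
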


\begin{proof}
Viewing a $\mathbb{C}$-VHS as a chain of certain type with the stability parameter $\overrightarrow{\alpha}_{\mathrm{Higgs}}=(\alpha_1,\cdots,\alpha_l)$, since $\alpha_1>\cdots>\alpha_l$, then we can apply Proposition \ref{1} to obtain the four inequalities in proposition.

Conversely, given $(\overrightarrow{r},\overrightarrow{d})$, by a result of \cite{BGG}, the conditions \eqref{2.1}-\eqref{2.4} implies that there is  an $\overrightarrow{\alpha}_{\mathrm{Higgs}}$-semistable chain $Ch_{\overrightarrow{r},\overrightarrow{d_0}}$ of type $(\overrightarrow{r},\overrightarrow{d_0}=\overrightarrow{d}-\overrightarrow{\alpha}_{\mathrm{Higgs}})$ for $\overrightarrow{\alpha}_{\mathrm{Higgs}}=((l-1)(2g-2),\cdots,2g-2,0)$, hence produces a $\mathbb{C}$-VHS via the suitable semisimplification of the Jordan--H\"older filtration. Moreover, the condition \eqref{2.1} guarantees such chain is indecomposable, namely each $\theta_i$ is non-zero, thus the last statement is shown.
\end{proof}

\section{Simpson Filtration and Oper Stratification}\label{sec3}

\begin{definition} [\cite{CS6}] 
Let $E$ be a holomorphic vector bundle over $X$ with a holomorphic flat connection $\nabla:E\rightarrow E\otimes_{\mathcal{O}_X}K_X$. A decreasing filtration $\mathcal{F}=\{F^\bullet\}$ of $E$ by strict subbundles
$$
E=F^0\supset F^1\supset\cdots\supset F^k=0
$$
is called a \emph{Simpson filtration} if it satisfies the following two conditions:
\begin{itemize}
  \item Griffiths transversality: $\nabla: F^p\rightarrow F^{p-1}\otimes_{\mathcal{O}_X}K_X$,
  \item graded-semistability: the associated graded Higgs bundle $(\mathrm{Gr}_{\mathcal{F}}(E),\mathrm{Gr}_{\mathcal{F}}(\nabla))$, where $\mathrm{Gr}_{\mathcal{F}}(E)=\bigoplus_pE^p$ with $E^p=F^p/F^{p+1}$ and $\mathrm{Gr}_{\mathcal{F}}(\nabla)=\bigoplus_p\theta^p$ with $\theta^p: E^p\rightarrow E^{p-1}\otimes_{\mathcal{O}_X}K_X$ induced from $\nabla$, is a semistable Higgs bundle.
\end{itemize}
\end{definition}

In \cite{CS6}, Simpson studied the $\mathbb{C}^*$-action on flat bundles and obtained the following nice theorem:

\begin{theorem} [\cite{CS6}]\label{22} 
Let $(E,\nabla)$ be a flat bundle over  $X$.
\begin{enumerate}
\item There exist  Simpson filtrations $\mathcal{F}$ on $(E,\nabla)$.
\item Let $\mathcal{F}_1$, $\mathcal{F}_2$ be two Simpson filtrations on $(E,\nabla)$, then the associated graded Higgs bundles $(\mathrm{Gr}_{\mathcal{F}_1}(E),\mathrm{Gr}_{\mathcal{F}_1}(\nabla))$ and $(\mathrm{Gr}_{\mathcal{F}_2}(E),\mathrm{Gr}_{\mathcal{F}_2}(\nabla))$ are $S$-equivalent\footnote{We say two semistable Higgs bundles are \emph{$S$-equivalent} if their associated graded Higgs bundles defined by the Jordan-H\"older filtrations are isomorphic.}.
\item $(\mathrm{Gr}_\mathcal{F}(E),\mathrm{Gr}_\mathcal{F}(\nabla))$ is a stable Higgs bundle iff the Simpson filtration is unique.
\item $\lim\limits_{t\rightarrow 0}t\cdot(E,\nabla)=(\mathrm{Gr}_\mathcal{F}(E),\mathrm{Gr}_\mathcal{F}(\nabla))$.
\end{enumerate}
\end{theorem}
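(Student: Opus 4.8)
My plan is to derive all four assertions from one geometric device — the Rees/Deligne deformation attached to a Griffiths-transverse filtration — together with a termination argument for an explicit modification procedure. I would first record the elementary but essential point that a flat bundle $(E,\nabla)$ with $c_1(E)=0$ is automatically semistable as a $1$-flat bundle, since every $\nabla$-invariant subbundle carries a flat connection and hence has degree $0$; thus $(E,\nabla)$ is a genuine point of $\mathbb{M}_{\mathrm{dR}}(X,r)\subset\mathbb{M}_{\mathrm{Hod}}(X,r)$, on which the $\mathbb{C}^*$-action operates. Now, given \emph{any} Griffiths-transverse filtration $\mathcal{F}=\{E=F^0\supset F^1\supset\cdots\supset F^k=0\}$, I would form the Rees sheaf $\mathcal{R}_{\mathcal{F}}=\sum_p t^{-p}F^p$ on $X\times\mathbb{A}^1_t$ (the $\mathcal{O}_X[t]$-submodule of $E\otimes_{\mathbb{C}}\mathbb{C}[t,t^{-1}]$ generated by the $t^{-p}F^p$): it is locally free, flat over $\mathbb{A}^1$, with $\mathcal{R}_{\mathcal{F}}|_{t\neq0}\cong E$ and $\mathcal{R}_{\mathcal{F}}|_{t=0}\cong\mathrm{Gr}_{\mathcal{F}}(E)$, and Griffiths transversality $\nabla F^p\subset F^{p-1}\otimes K_X$ is exactly what makes $t\nabla$ preserve $\mathcal{R}_{\mathcal{F}}$, define a relative $t$-connection on it, and restrict at $t=0$ to the Higgs field $\mathrm{Gr}_{\mathcal{F}}(\nabla)$. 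Unwinding normalizations, the fibre over $t\in\mathbb{C}^*$ is $(E,t\nabla)\cong t\cdot(E,\nabla)$, so this family is precisely the $\mathbb{C}^*$-orbit map of $(E,\nabla)$ extended across $t=0$; if moreover $\mathcal{F}$ is graded-semistable then $\mathcal{R}_{\mathcal{F}}|_{t=0}$ is a semistable Higgs bundle, the whole family is fibrewise semistable, and it induces a morphism $\mathbb{A}^1\to\mathbb{M}_{\mathrm{Hod}}(X,r)$ extending the orbit map. Since $\mathbb{M}_{\mathrm{Hod}}(X,r)$ is a separated (quasi-projective) variety, the valuative criterion of separatedness makes this extension unique. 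Granting (1), this yields (4) — the limit is the value at $0$, i.e. $(\mathrm{Gr}_{\mathcal{F}}(E),\mathrm{Gr}_{\mathcal{F}}(\nabla))$ — and it yields (2), because two Simpson filtrations produce two extensions of the same orbit map, hence the same point of $\mathbb{M}_{\mathrm{Dol}}(X,r)$, and points of $\mathbb{M}_{\mathrm{Dol}}(X,r)$ are exactly $S$-equivalence classes of semistable Higgs bundles.

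For existence (part (1)) I would build a Simpson filtration by iterated destabilizing modifications: start from the trivial filtration $E\supset0$ (whose graded is $E$ with zero Higgs field); if the associated graded Higgs bundle is semistable, stop; otherwise take its maximal destabilizing sub-Higgs-sheaf and use it to perform an elementary modification of the current filtration, producing a new Griffiths-transverse filtration. The genuinely delicate point — and what I expect to be the main obstacle of the whole theorem — is that this procedure \emph{terminates}; this is not formal and is essentially the content of Simpson's iterated-destabilizing-modification analysis. The input that drives it is a uniform upper bound for $\deg F$ over all subbundles $F$ of a flat bundle of rank $r$ (the bound recalled in Section~\ref{sec3}), which caps the slopes that can occur in any of the gradeds; with this one shows that a suitable lexicographically ordered invariant built from the Harder--Narasimhan data of the successive gradeds strictly decreases at each modification, so the process cannot run forever. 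I would follow Simpson's argument fairly closely here.

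Finally, for part (3), for the implication that a stable graded forces uniqueness: if $(\mathrm{Gr}_{\mathcal{F}}(E),\mathrm{Gr}_{\mathcal{F}}(\nabla))$ is stable it is a smooth point $p_0$ of $\mathbb{M}_{\mathrm{Hod}}(X,r)$ and of the $\mathbb{C}^*$-fixed locus, so Bialynicki--Birula theory applies near $p_0$: the attracting set is smooth and the $\mathbb{C}^*$-equivariant degeneration of a point of it to $p_0$ is unique. Since the Rees construction presents each Simpson filtration of $(E,\nabla)$ as exactly such a degeneration, and the filtration is recovered from the family by reading off $\mathbb{C}^*$-weights, uniqueness of the degeneration forces $\mathcal{F}$ to be unique; the remaining rigidity input is $\mathrm{Aut}(\mathrm{Gr}_{\mathcal{F}}(E),\mathrm{Gr}_{\mathcal{F}}(\nabla))=\mathbb{C}^*$, which I would insert via a deformation-theoretic argument. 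For the converse, if the graded is only strictly semistable it admits a proper sub-Higgs-bundle of slope $0$ (a Jordan--H\"older piece); lifting it through the filtration produces a genuinely different Griffiths-transverse filtration whose graded is $S$-equivalent to the original and hence still semistable, so the Simpson filtration is not unique. Together this gives both directions of (3).
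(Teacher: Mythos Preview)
Your approach to part (1) --- iterated destabilizing modifications terminating via a lexicographically decreasing invariant --- is exactly what the paper sketches; the paper records the algorithm (start from the trivial filtration, take the maximal destabilizing sub-Higgs sheaf $H$, replace $F^\bullet$ by $G^p=\Ker(E\to (E/F^p)/H^{p-1})$, repeat), states that Simpson's three bounded invariants strictly decrease in lexicographic order, and defers the verification to \cite{CS6} and \cite{Hua}, just as you defer the termination to Simpson.

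For parts (2), (3), (4) the paper gives no argument at all: the full theorem is simply attributed to \cite{CS6}. Your Rees-construction route to (2) and (4) is correct and is in fact the standard (and essentially Simpson's) way to see these statements: the Rees family over $\mathbb{A}^1$ realises the $\mathbb{C}^*$-orbit closure of $(E,\nabla)$ inside $\mathbb{M}_{\mathrm{Hod}}(X,r)$, graded-semistability makes every fibre semistable so the family descends to a morphism $\mathbb{A}^1\to\mathbb{M}_{\mathrm{Hod}}(X,r)$, and separatedness of this quasi-projective variety pins down the value at $0$, yielding both the limit formula and the $S$-equivalence of any two gradeds. Your treatment of (3) is also sound in outline; the only spot I would tighten is the converse direction. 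When you modify $\mathcal{F}$ by a slope-$0$ Jordan--H\"older sub $H\subsetneq\mathrm{Gr}_{\mathcal{F}}(E)$, you should make explicit that (i) the resulting filtration $\mathcal{G}$ is genuinely different from $\mathcal{F}$ (some $G^p\neq F^p$, since $H$ is a nontrivial proper graded subobject, so $H^{p-1}\neq 0$ for some $p$ with $H^{p-1}\neq F^{p-1}/F^p$), and (ii) the new graded $(\mathrm{Gr}_{\mathcal{G}}(E),\mathrm{Gr}_{\mathcal{G}}(\nabla))$ is again semistable, which follows because it is an extension of $\mathrm{Gr}_{\mathcal{F}}(E)/H$ by a shift of $H$, both semistable of slope $0$.
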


The existence of Simpson filtrations is obtained via a beautiful iterating process, here we sketch how it works.

Suppose $(E,\nabla)$ admits a filtration 
$$
\mathcal{F}: E=F^0\supset F^1\supset\cdots\supset F^k=0
$$
that satisfies the Griffiths transversality $\nabla(F^p)\subset F^{p-1}\otimes K_X$, and such that the associated Higgs bundle $(V,\theta):=(\mathrm{Gr}_{\mathcal{F}}(E),\mathrm{Gr}_{\mathcal{F}}(\nabla))$ is not semistable. To see the existence of such filtration, we can begin with the trivial filtration $E=F^0\supset F^1=0$,  the graded Higgs bundle will be $(\mathrm{Gr}_{\mathcal{F}}(E),\mathrm{Gr}_{\mathcal{F}}(\nabla))=(E,0)$.  Take $H\subset(V,\theta)$ to be the maximal destabilizing subsheaf, which is known being unique and a subbundle of $V$, and the quotient $V/H$ is also a subbundle of $E$. As a sub-Higgs bundle of a $\mathbb{C}$-VHS, $H$ is also  a $\mathbb{C}$-VHS, and is a sub-$\mathbb{C}$-VHS of $(V,\theta)$, that is, $H=\bigoplus H^p$ with each $H^p=H\bigcap \mathrm{Gr}_{\mathcal{F}}^p(E)\subset F^p/F^{p+1}$ being a strict subbundle.

The new filtration $\mathcal{G}=\{G^\bullet\}$ is defined as
$$
G^p:=\Ker\left(E\to\frac{E/F^p}{H^{p-1}}\right).
$$
It satisfies the Griffiths traversality since $\theta(H^p)\subset H^{p-1}\otimes K_X$, and it fits into the short exact sequence
$$
0\longrightarrow \mathrm{Gr}_{\mathcal{F}}^p(E)/H^p\longrightarrow\mathrm{Gr}_{\mathcal{G}}^p(E)\longrightarrow H^{p-1}\longrightarrow0.
$$
If the new resulting graded Higgs bundle $(\mathrm{Gr}_{\mathcal{G}}(E),\mathrm{Gr}_{\mathcal{G}}(\nabla))$ is still not semistable, then we continue this process to obtain a new graded Higgs bundle. By introducing three bounded invariants, Simpson showed that the iteration process will strictly decrease these invariants in lexicographic order\footnote{Details on these invariants can be found in Simpson's paper \cite{CS6} (see also the second named author's thesis \cite{Hua}). }. Therefore, after a finite step, we will find a filtration such that the associated graded Higgs bundle is semistable. 

This can be concluded as the following algorithm flowchart:

\medskip

\tikzstyle{startstop} = [rectangle, rounded corners, minimum width = 2cm, minimum height=0.6cm,text centered, draw = black]
\tikzstyle{io} = [trapezium, trapezium left angle=70, trapezium right angle=110, minimum width=2cm, minimum height=0.8cm, text centered, draw=black]
\tikzstyle{process} = [rectangle, minimum width=3cm, minimum height=0.8cm, text centered, draw=black]
\tikzstyle{decision} = [diamond, aspect = 4, text centered, draw=black]
\tikzstyle{arrow} = [->,>=stealth]
\vspace*{-15pt}
\[
\begin{tikzpicture}[node distance=0.8cm]
\node[startstop](start){Start};
\node[io, below of = start, yshift = -0.5cm](in1){$(E,\nabla,\mathcal{F})$};
\node[process, below of = in1, yshift = -0.6cm](dec1){$H\subset(\mathrm{Gr}_{\mathcal{F}}(E),\mathrm{Gr}_{\mathcal{F}}(\nabla))$ maximal destabilizing subsheaf};
\node[process, below of = dec1, yshift = -0.8cm](dec2){$\mathcal{G}: G^p=\mathrm{Ker}\Big(E\to \frac{E/F^p}{H^{p-1}}\Big)$};
\node[decision, below of = dec2, yshift = -1.2cm](pro1){ $\overset{(\mathrm{Gr}_{\mathcal{G}}(E),\mathrm{Gr}_{\mathcal{G}}(\nabla))}{\mathrm{semistable} ?}$};
\node[process, below of = dec1, yshift = -0.8cm,xshift=-6cm](dec3){$\mathcal{F}:=\mathcal{G}$};
\node[io, below of = pro1, yshift = -1.2cm](out1){$(\mathrm{Gr}_{\mathcal{G}}(E),\mathrm{Gr}_{\mathcal{G}}(\nabla))$};
\node[startstop, below of = out1, yshift = -0.6cm](stop){Stop};
\draw [arrow] (start) -- (in1);
\draw [arrow] (in1) -- (dec1);
\draw [arrow] (dec1) -- (dec2);
\draw [arrow] (dec2) -- (pro1);
\draw [arrow] (pro1) -| node [right] {\color{red}{No}} (dec3);
\draw [arrow] (dec3) |- (dec1);
\draw [arrow] (pro1) -- node [right] {\color{red}{Yes}} (out1);
\draw [arrow] (out1) -- (stop);
\end{tikzpicture}
\]

Generally speaking, calculating the Simpson filtration for a given flat bundle is quite hard. For rank 2 case, the Simpson filtration is exactly given by the Harder--Narasimhan filtration of the bundle itself. But for the flat bundles of higher rank, it's very complicated. Here we give an explicit description for rank 3 case (details especially the proof can be found in \cite{Hua1}). An analogous result for the Higgs bundles of rank 3 can be found in \cite{GZ}.

\begin{example}
Let $(E,\nabla)$ be a   flat bundle of rank 3 over a  $X$, and we assume $E$ is not a stable bundle. The Simpson filtration on $(E,\nabla)$ is described as follows. 
\begin{enumerate}
  \item Assume the Harder-Narasimhan filtration on $E$ is given by $H^1\subset E$ with $\rank (H^1)=1, \deg(H^1)=d$, then $0<d\leq \frac{2}{3}(2g-2)$.  The flat connection $\nabla$ induces a nonzero morphism
  $\theta: H^1 \rightarrow E/H^1\otimes K_X$. Denote by $I$ the line  subbundle of $E/H^1$ obtained by  saturating the subsheaf $\theta(H^1)\otimes K_X^{-1}$.
  \begin{description}
    \item[1.1] If $d-2g+2\leq \deg(I)<-d$, then the  Simpson filtration on $(E,\nabla)$ coincides with the Harder-Narasimhan filtration and
    $$\lim\limits_{t\rightarrow 0}t\cdot(E,\nabla)=(H^1\oplus E/H^1, \left(
                                                                       \begin{array}{cc}
                                                                         0 & 0 \\
                                                                         \theta& 0 \\
                                                                       \end{array}
                                                                     \right)
    ).$$
    \item[1.2] If $\deg(I)=-d$, although the Simpson filtration on $(E,\nabla)$ is not unique, the limiting polystable Higgs bundle is given by
$$\lim\limits_{t\rightarrow 0}t\cdot(E,\nabla)=(H^1\oplus I, \left(
                                                               \begin{array}{cc}
                                                                 0 & 0 \\
                                                                 \theta & 0 \\
                                                               \end{array}
                                                             \right)
)\oplus (\frac{E/H^1}{I},0).$$
    \item[1.3] If $-d< \deg(I)\leq -\frac{d}{2}$, one defines $$ F^1:=\Ker (E\rightarrow\frac{E/H^1}{I}),$$
then the  Simpson  filtration  is given by $H^1\subset F^1\subset E$, and
 $$\lim\limits_{t\rightarrow 0}t\cdot(E,\nabla)=(H^1\oplus I\oplus  \frac{E/H^1}{I},\left(
                                                                   \begin{array}{ccc}
                                                                     0 & 0 & 0 \\
                                                                     \theta & 0 & 0 \\
                                                                     0 & \varphi & 0 \\
                                                                   \end{array}
                                                                 \right)
),$$
where   the nonzero morphism  $\varphi: I\rightarrow\frac{E/H^1}{I}\otimes K_X$ is induced by $\nabla: F^1\rightarrow E\otimes K_X$.

  \end{description}
  \item Assume the Harder-Narasimhan filtration on $E$ is given by $G^1\subset E$ with $\rank (G^1)=2, \deg(G^1)=l$, then $0<l\leq\frac{2}{3}(2g-2)$. The flat connection $\nabla$ induces a nonzero morphism
  $\vartheta: G^1 \rightarrow E/G^1\otimes K_X$. Denote by $N$ the line subbundle of $G^1$ obtained by saturating the subsheaf $\Ker(\vartheta)$.
  \begin{description}
    \item[2.1]If $2l-2g+2\leq\deg(N)<0$, then the   Simpson filtration on $(E,\nabla)$ coincides with the Harder-Narasimhan filtration and
    $$\lim\limits_{t\rightarrow 0}t\cdot(E,\nabla)=(G^1\oplus E/G^1, \left(
                                                                       \begin{array}{cc}
                                                                         0 & 0 \\
                                                                        \vartheta& 0 \\
                                                                       \end{array}
                                                                     \right)
    ).$$
    \item[2.2] If $\deg(N)=0$,  the limiting polystable Higgs bundle is given by
$$\lim\limits_{t\rightarrow 0}t\cdot(E,\nabla)=(N,0)\oplus(G^1/N\oplus E/G^1, \left(
                                                               \begin{array}{cc}
                                                                 0 & 0 \\
                                                               \vartheta & 0 \\
                                                               \end{array}
                                                             \right)
).$$
    \item[2.3] If $0<\deg(N)\leq \frac{l}{2}$, then the  Simpson  filtration on $(E,\nabla)$ is given by $N\subset G^1\subset E$, and
    $$\lim\limits_{t\rightarrow 0}t\cdot(E,\nabla)=(N\oplus G^1/N\oplus E/G^1,\left(
                                                                   \begin{array}{ccc}
                                                                     0 & 0 & 0 \\
                                                                     \phi & 0 & 0 \\
                                                                     0 & \vartheta & 0 \\
                                                                   \end{array}
                                                                 \right)
),$$
where the nonzero morphism $\phi: N\rightarrow G^1/N\otimes K_X$ is induced by $\nabla:N\rightarrow G^1\otimes K_X$.
 \end{description}
  \item Assume the Harder-Narasimhan filtration on $E$ is given by $A^1\subset A^2\subset E$ with $\rank (A^i)=i, \deg(A^i)=a_i, i=1,2$, then $0<2a_1-a_2\leq 2g-2, 0<2a_2-a_1\leq 2g-2$. The flat connection $\nabla$ induces  nonzero morphisms $\psi: A^1\rightarrow E/A^1\otimes K_X$ and $\chi: A^2\rightarrow E/A^2\otimes K_X$, then   define $J=\psi(A^1)\otimes K_X^{-1}\subset E/A^1$ and $M=\Ker(\chi)\subset A^2$ viewing as line subbundles by saturating, and define $L^1=\Ker(E\rightarrow\frac{E/A^1}{J})$.
     \begin{description}
    \item[3.1]  When  $-a_1<\deg(J)\leq a_2-a_1$,  the  Simpson  filtration  on $(E,\nabla)$ is given by $A^1\subset L^1\subset E$, and
 $$\lim\limits_{t\rightarrow 0}t\cdot(E,\nabla)=(A^1\oplus J\oplus  \frac{E/A^1}{J},\left(
                                                                   \begin{array}{ccc}
                                                                     0 & 0 & 0 \\
                                                                     \psi & 0 & 0 \\
                                                                     0 & \rho & 0 \\
                                                                   \end{array}
                                                                 \right)
),$$ where   the nonzero morphism  $\rho: J\rightarrow\frac{E/A^1}{J}\otimes K_X$ is induced by $\nabla: L^1\rightarrow E\otimes K_X$. In particular, if $\deg(J)=a_2-a_1$, the Simpson  filtration coincides with the Harder-Narasimhan filtration.
\item[3.2] When $\deg(J)=-a_1$,  the limiting polystable Higgs bundle is given by
$$\lim\limits_{t\rightarrow 0}t\cdot(E,\nabla)=(A^1\oplus J, \left(
                                                               \begin{array}{cc}
                                                                 0 & 0 \\
                                                                 \psi & 0 \\
                                                               \end{array}
                                                             \right)
)\oplus (\frac{E/A^1}{J},0).$$
       \item[3.3]
       When  $a_1-2g+2\leq \deg(J)<-a_1$,
       \begin{description}
         \item [3.3.1]if $a_2-a_1<0$, the Simpson  filtration is given by $A_1\subset E$, and
         $$\lim\limits_{t\rightarrow 0}t\cdot(E,\nabla)=(A^1\oplus E/A^1, \left(
                                                                            \begin{array}{cc}
                                                                              0 & 0 \\
                                                                              \psi & 0 \\
                                                                            \end{array}
                                                                          \right)
         ).$$
         \item [3.3.2] if $a_2-a_1\geq0$, \begin{description}
                                 \item[3.2.2.1] for $2a_2-2g+2\leq\deg(M)<0$, the Simpson filtration is given by $A^2\subset E$, and
                                 $$\lim\limits_{t\rightarrow 0}t\cdot(E,\nabla)=(A^2\oplus E/A^2, \left(
                                                                            \begin{array}{cc}
                                                                              0 & 0 \\
                                                                              \chi & 0 \\
                                                                            \end{array}
                                                                          \right)
         ).$$
         \item[3.3.2.2] for $a_2-a_1\geq0,\deg(M)=0$, the limiting polystable Higgs bundle is given by
         $$\lim\limits_{t\rightarrow 0}t\cdot(E,\nabla)=(M,0)\oplus ( A^2/M\oplus E/A^2,\left(
                                                                                          \begin{array}{cc}
                                                                                            0 & 0 \\
                                                                                            \chi & 0 \\
                                                                                          \end{array}
                                                                                        \right)
         ).$$
                                 \item[3.3.2.3] for $a_2-a_1>0, 0<\deg(M)\leq a_2-a_1$, the Simpson filtration is given by
                                 $M\subset A^2\subset E$, and
                                 $$\lim\limits_{t\rightarrow 0}t\cdot(E,\nabla)=(M\oplus A^2/M\oplus E/A^2,\left(
                                                                   \begin{array}{ccc}
                                                                     0 & 0 & 0 \\
                                                                    \varrho & 0 & 0 \\
                                                                     0 & \chi & 0 \\
                                                                   \end{array}
                                                                 \right)
),$$ where the nonzero morphism $\varrho: M\rightarrow A^2/M$ is induced by $\nabla: M\rightarrow A^2\otimes K_X$. In particular, as $\deg(M)=a_2-a_1$, the underlying vector bundle  of the limiting Higgs bundle  coincides with graded  vector bundle from the Harder-Narasimhan filtration.
                               \end{description}

\end{description}
       \end{description}
\end{enumerate}
\end{example}

The following proposition is an application of the above description of Simpson filtration.

\begin{proposition}
Let $E$ be a vector bundle of rank 3 and degree 0 over $X$, and let $W$ be any subbundle of $E$. If $E$ admits a flat $\lambda$-connection ($\lambda\neq 0$), then    $\deg(W)\leq 4g-4$.
\end{proposition}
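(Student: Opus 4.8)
The plan is to combine the explicit classification of Simpson filtrations for rank $3$ flat bundles given in the Example above with elementary Harder--Narasimhan estimates. Since $\lambda\neq 0$, the rescaled operator $\nabla:=\lambda^{-1}D^\lambda$ is an ordinary flat connection, so $(E,\nabla)$ is a flat bundle of rank $3$ with $\deg(E)=0$, to which the Example applies.

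The first step is to reduce the statement to a bound on $\mu_{\mathrm{max}}(E)$, the largest slope of a subsheaf of $E$. If $\rank(W)\in\{0,3\}$ then $\deg(W)=0\le 4g-4$ (here $g\ge 2$), so we may assume $\rank(W)\in\{1,2\}$. Since every subsheaf of $W$ is a subsheaf of $E$, one has
$$
\deg(W)=\rank(W)\,\mu(W)\le \rank(W)\,\mu_{\mathrm{max}}(W)\le \rank(W)\,\mu_{\mathrm{max}}(E)\le 2\,\mu_{\mathrm{max}}(E),
$$
so it suffices to prove $\mu_{\mathrm{max}}(E)\le 2g-2$.

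The second step is the case analysis. If $E$ is semistable, then $\mu_{\mathrm{max}}(E)=\mu(E)=0$ and we are done. If $E$ is not semistable, its Harder--Narasimhan filtration has one of the three shapes listed in the Example, and in each case $\mu_{\mathrm{max}}(E)$ equals the leading Harder--Narasimhan slope, which is controlled by the numerical inequalities recorded there: in case (1), $\mu_{\mathrm{max}}(E)=d\le\tfrac{2}{3}(2g-2)$; in case (2), $\mu_{\mathrm{max}}(E)=\tfrac{l}{2}\le\tfrac{1}{3}(2g-2)$; and in case (3), $\mu_{\mathrm{max}}(E)=a_1$, and from $0<2a_1-a_2\le 2g-2$ and $0<2a_2-a_1\le 2g-2$ one gets $3a_1=2(2a_1-a_2)+(2a_2-a_1)\le 3(2g-2)$, hence $a_1\le 2g-2$. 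In all cases $\mu_{\mathrm{max}}(E)\le 2g-2$, and combining with the displayed inequality yields $\deg(W)\le 4g-4$.

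The argument is essentially a bookkeeping exercise: all the substance is imported from the Example, namely the fact that the second fundamental form of each Harder--Narasimhan step of a flat bundle is nonzero (such a step has positive degree, hence cannot be $\nabla$-invariant), which forces the slope jumps between consecutive steps to be at most $2g-2$. Accordingly I do not anticipate a serious obstacle; the only points requiring care are checking that the three Harder--Narasimhan types exhaust the rank-$3$ unstable case and that $\mu_{\mathrm{max}}(E)$ is correctly identified with the leading slope, so that the crude bound $\deg(W)\le 2\,\mu_{\mathrm{max}}(E)$ is legitimate in each of them.
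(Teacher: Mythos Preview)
Your proof is correct and takes a genuinely different, more streamlined route than the paper. Where the paper works through each of the eleven subcases of the Simpson filtration classification and in every one of them invokes the (semi)stability of the associated graded Higgs bundle to squeeze $\deg(W)$ via short exact sequences of the form $0\to W\cap F^i\to W\to W/(W\cap F^i)\to 0$, you bypass all of that by reducing to the single inequality $\deg(W)\le 2\,\mu_{\max}(E)$ and then bounding $\mu_{\max}(E)$ using only the coarse numerical constraints on the Harder--Narasimhan type recorded at the top of each case. Your argument is shorter and uses nothing beyond those HN slope bounds (which, as you correctly note, are themselves just the nonvanishing of the second fundamental form between consecutive HN pieces). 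What the paper's longer approach buys is sharper case-by-case information: in most subcases it actually obtains bounds like $\deg(W)<g-1$ or $\deg(W)\le 2g-2$, and only in subcases 3.1 and 3.2 does it hit the worst case $\deg(W)\le a_1+a_2\le 4g-4$. So your route is more efficient for the stated proposition, while the paper's route records finer data that would be useful if one wanted to identify in which Harder--Narasimhan strata the bound $4g-4$ can actually be approached.
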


\begin{proof}
We only need to consider the case of $\lambda=1$, then we can apply the  the above example.
 Firstly, we consider the case 1.1.
There is a short exact sequence
    $$0\rightarrow W\bigcap H^1\rightarrow W\rightarrow W/(W\bigcap H^1)\rightarrow 0.$$
    Since $ W/(W\bigcap H^1)$ is a subsheaf of $E/H^1$, it follows from the stability of Higgs bundle that
    $$\deg(W)=\deg( W\bigcap H^1)+\deg (W/(W\bigcap H^1))\leq \deg(H^1)=d<g-1.$$
    Next, we consider the cases 1.2 and 1.3. Again by stability of Higgs bundle, we have
    \begin{align*}
     \deg(W/(W\bigcap F^1))&\leq0,\\
     \deg(W\bigcap H^1\oplus I\oplus \frac{E/H^1}{I})&\leq0,\\
     \deg (\frac{W\bigcap F^1}{W\bigcap H^1}\oplus \frac{E/H^1}{I})&\leq0.
    \end{align*}
 Therefore we arrive at
 \begin{align*}
         \deg(W)\leq \deg(W\bigcap F^1)&\leq\deg(W\bigcap H^1)-\deg( \frac{E/H^1}{I})\\
         &\leq2\deg(H^1)+\deg(I)\\
         &\leq\frac{3}{2}d\leq 2g-2.
       \end{align*}
 Similar arguments show that
 $$\deg(W)\leq\left\{
                \begin{array}{ll}
                  l<g-1, & \hbox{case 2.1;} \\
                  \frac{3}{2}l\leq2g-2, & \hbox{cases 2.2 and 2.3;} \\
                  a_1+a_2\leq4g-4, & \hbox{cases 3.1 and 3.2;}\\
                a_1<g-1, & \hbox{case 3.3.1;}\\
                 a_2<g-1, & \hbox{case 3.3.2.1;}\\
                 2a_2-a_1\leq2g-2 & \hbox{cases 3.3.2.2 and 3.3.2.3}.\end{array}
              \right.
 $$
 The conclusion follows.
\end{proof}

For a $\lambda$-flat bundle $(E,D^\lambda) (\lambda\neq0)$, Simpson showed the limit $\lim\limits_{t\to0}t\cdot(E,D^\lambda)$ can be obtained by taking the grading of the Simpson filtration on the associated flat bundle $(E,\lambda^{-1}D^\lambda)$, hence it is a $\mathbb{C}$-VHS, namely, we have

\begin{theorem}[\cite{CS6}]\label{limit}
Let $(E,D^\lambda)\in\mathbb{M}_{\mathrm{Hod}}(X,r)$ be a $\lambda$-flat bundle ($\lambda\neq 0$), then we have
$$
\lim\limits_{t\rightarrow 0}t\cdot(E,D^\lambda)=(\mathrm{Gr}_{\mathcal{F}_\lambda}(E),\mathrm{Gr}_{\mathcal{F}_\lambda}(\lambda^{-1}D^\lambda)),
$$
where ${\mathcal{F}_\lambda}$ is the Simpson filtration on the associated flat bundle $(E,\lambda^{-1}D^\lambda)$.
\end{theorem}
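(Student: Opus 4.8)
The plan is to reduce the assertion to the case $\lambda=1$, which is exactly Theorem \ref{22}(4), by a transparent reparametrization of the $\mathbb{C}^*$-orbit.

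First I would record the elementary fact that for $\lambda\neq 0$ the operator $\nabla:=\lambda^{-1}D^\lambda$ is an ordinary flat connection on $E$: dividing the $\lambda$-twisted Leibniz rule by $\lambda$ yields the usual Leibniz rule, and $\nabla\circ\nabla=\lambda^{-2}(D^\lambda\circ D^\lambda)=0$. Moreover a subsheaf $F\subset E$ is $\nabla$-invariant if and only if it is $D^\lambda$-invariant (because $\lambda\neq 0$), and invariant subsheaves have the same rank and degree in either picture; hence $(E,\nabla)$ is (semi/poly)stable precisely when $(E,D^\lambda)$ is. In particular $(E,\nabla,1)=(E,\lambda^{-1}D^\lambda,1)$ is a well-defined point of $\mathbb{M}_{\mathrm{dR}}(X,r)\subset\mathbb{M}_{\mathrm{Hod}}(X,r)$, namely the image of $(E,D^\lambda,\lambda)$ under the canonical identification $\mathbb{M}_{\mathrm{Hod}}^\lambda(X,r)\cong\mathbb{M}_{\mathrm{dR}}(X,r)$.

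Next I would compare the two orbit maps $\mathbb{C}^*\to\mathbb{M}_{\mathrm{Hod}}(X,r)$. For $t\in\mathbb{C}^*$ one has $t\cdot(E,D^\lambda,\lambda)=(E,tD^\lambda,t\lambda)$, while $s\cdot(E,\nabla,1)=(E,s\lambda^{-1}D^\lambda,s)$. Setting $s=t\lambda$ we get $s\lambda^{-1}D^\lambda=tD^\lambda$ and $s=t\lambda$, so $t\cdot(E,D^\lambda,\lambda)=(t\lambda)\cdot(E,\nabla,1)$ as points of $\mathbb{M}_{\mathrm{Hod}}(X,r)$; that is, the two orbit maps differ only by precomposition with the automorphism $t\mapsto\lambda t$ of $\mathbb{C}^*$. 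Since $\lambda$ is a fixed nonzero scalar, $t\to 0$ is equivalent to $t\lambda\to 0$, and therefore $\lim_{t\to 0}t\cdot(E,D^\lambda,\lambda)=\lim_{s\to 0}s\cdot(E,\lambda^{-1}D^\lambda,1)$, the latter limit existing by Simpson's theorem applied to the flat bundle $(E,\nabla)=(E,\lambda^{-1}D^\lambda)$.

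Finally I would invoke Theorem \ref{22}: by part (1) the flat bundle $(E,\lambda^{-1}D^\lambda)$ admits a Simpson filtration $\mathcal{F}_\lambda$, and by part (4) one has $\lim_{s\to 0}s\cdot(E,\lambda^{-1}D^\lambda,1)=(\mathrm{Gr}_{\mathcal{F}_\lambda}(E),\mathrm{Gr}_{\mathcal{F}_\lambda}(\lambda^{-1}D^\lambda))$; combining with the previous step gives the desired formula, and parts (2)–(3) ensure that, up to $S$-equivalence, the right-hand side does not depend on the chosen filtration, so the limit point in $\mathbb{M}_{\mathrm{Dol}}(X,r)$ is well-defined. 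The only non-formal ingredient here is the existence statement and the limit formula at $\lambda=1$, i.e. Theorem \ref{22} itself, which we take for granted; beyond that the proof is merely an accounting of the $\mathbb{C}^*$-action and presents no real obstacle.
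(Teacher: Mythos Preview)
Your argument is correct, and it is exactly the reduction the paper indicates: the paper does not give its own proof of Theorem~\ref{limit} but simply records (citing \cite{CS6}) that the limit is obtained from the Simpson filtration on the associated flat bundle $(E,\lambda^{-1}D^\lambda)$, which is precisely your reparametrization $t\cdot(E,D^\lambda,\lambda)=(t\lambda)\cdot(E,\lambda^{-1}D^\lambda,1)$ followed by Theorem~\ref{22}(4).
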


With previous notation, divide $V(X,r)$  into connected components $V(X,r)=\coprod_\alpha V_\alpha$. Define the subset $\mathbb{S}_\alpha\subset \mathbb{M}_{\mathrm{dR}}(X,r)=\pi^{-1}(1)$ via
$$
\mathbb{S}_\alpha:=\left\{y\in\mathbb{M}_{\mathrm{dR}}(X,r)\  \Big|\ \lim\limits_{t\to0}t\cdot y\in V_\alpha\right\},
$$
then each $\mathbb{S}_\alpha$ is locally closed, and these subsets partition the de Rham moduli space into the \emph{oper stratification} \cite{CS6}
$$
\mathbb{M}_{\mathrm{dR}}(X,r)=\coprod_\alpha\mathbb{S}_\alpha,\quad M_{\mathrm{dR}}(X,r)=\coprod_\alpha S_\alpha,
$$
where $S_\alpha:=\mathbb{S}_\alpha\bigcap M_{\mathrm{dR}}(X,r)$.

\begin{definition}
For the oper stratification $M_{\mathrm{dR}}(X,r)=\coprod_\alpha S_\alpha$, a (non-empty) stratum $S_\alpha$ is called
\begin{enumerate}
  \item the \emph{maximal stratum} if it has maximal dimension among all the (non-empty) strata,
  \item the \emph{minimal stratum} if it has minimal dimension  among all the (non-empty) strata.
\end{enumerate}
\end{definition}

\begin{example}
A \emph{uniformizing Higgs bundle} $(E,\theta)$ of rank $r\geq 2$ is given by
$$
E=L\oplus L\otimes K_X^{-1}\oplus\cdots\oplus L\otimes K_X^{-r+1}
$$
and $\theta$ is determined by the natural isomorphisms $L\otimes K_X^i\xrightarrow{\simeq} (L\otimes K_X^{i-1})\otimes K_X$,
where $L$ is a line bundle of degree $(r-1)(g-1)$. The moduli space $V_{\mathrm{uni}}$ of uniformizing Higgs bundles is isomorphic to $\mathrm{Jac}^{(r-1)(g-1)}(X)$, which is an irreducible component of $V(X,r)$. The stratum $S_{\mathrm{oper}}$ corresponding to $V_{\mathrm{uni}}$ is the moduli space of $\mathrm{GL}(r,\mathbb{C})$-opers, called the \emph{oper stratum}. It is shown that  the stratum $S_{\mathrm{oper}}$ is closed in $M_{\mathrm{dR}}(X,r)$ \cite{BB}.
\end{example}

\section{Oper Stratum Conjecture}

Let $u\in V(X,r)$ be a $\mathbb{C}$-VHS, we define the following \emph{Hodge fiber} over $u$ in $\mathbb{M}_{\mathrm{Hod}}(X,r)$ as 
$$
Y_u :=\left\{(E, D^\lambda)\in\mathbb{M}_{\mathrm{Hod}}(X,r)\ \Big|\ \lim\limits_{t\to0}t\cdot(E,D^\lambda)=u\right\},
$$
and define $Y_u^\lambda:=Y_u\bigcap\pi^{-1}(\lambda)$, called the \emph{$\lambda$-Hodge fiber}. It's clear $Y_u^\lambda\simeq Y_u^{\lambda'}$ as analytic varieties whenever $\lambda,\lambda'\in\mathbb{C}^*$. Moreover, when $u$ is stable, then all fibers $Y_u^\lambda, \lambda\in\mathbb{C}$ are analytic isomorphic (\cite{C,H-H}), in particular, these isomorphisms can be realized by conformal limits (\cite{C}).

In the following, we show that each $\lambda$-Hodge fiber $Y_u^\lambda$ over a stable $\mathbb{C}$-VHS $u$ is of half-dimension of the moduli space $\mathbb{M}_{\mathrm{Hod}}^\lambda(X,r)$. This property has been shown by several authors (cf. \cite{CS6} for $\lambda=1$, and \cite{C} for $\lambda=0$). 

\begin{lemma}\label{halfdim}
Let   $u\in V(X,r)$ be a  stable $\mathbb{C}$-VHS, then the tangent space $T_{v}Y^\lambda_u$ at $v\in Y^\lambda_u$ is of half-dimension of $T_v\mathbb{M}_{\mathrm{Hod}}^\lambda(X,r)$. 
\end{lemma}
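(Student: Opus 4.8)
The plan is to reduce the statement to computing $\dim Y^0_u$ at the $\mathbb{C}^*$-fixed point $u$ itself, where the deformation complex of the Higgs bundle $u$ carries a $\mathbb{C}^*$-weight grading, and then to extract the answer from the Serre-duality symmetry of that grading.

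First I would reduce to the case $\lambda=0$ and to the dimension of $Y^0_u$ as a variety. Any $v=(E,D^\lambda)\in Y^\lambda_u$ is automatically a stable $\lambda$-flat bundle: if $v$ were strictly polystable, say $v=\bigoplus_iv_i$, then $\lim_{t\to0}t\cdot v=\bigoplus_i\lim_{t\to0}t\cdot v_i$ would be decomposable, contradicting the stability of $u$ (using Theorems \ref{22} and \ref{limit} together with the fact that the limit of a direct sum is the direct sum of the limits). Hence $v$ is a smooth point of $\mathbb{M}^\lambda_{\mathrm{Hod}}(X,r)$, so $\dim T_v\mathbb{M}^\lambda_{\mathrm{Hod}}(X,r)=\dim\mathbb{M}^\lambda_{\mathrm{Hod}}(X,r)=2r^2(g-1)+2$. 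Moreover, as recalled in the discussion preceding this lemma, for stable $u$ all the Hodge fibers $Y^\lambda_u$, $\lambda\in\mathbb{C}$, are analytically isomorphic (by rescaling for $\lambda,\lambda'\in\mathbb{C}^*$, and by a conformal limit between $\lambda\neq0$ and $\lambda=0$), and each is an affine space by Bialynicki--Birula theory on $\mathbb{M}_{\mathrm{Hod}}(X,r)$ (\cite{C,H-H}); in particular $Y^\lambda_u$ is smooth and irreducible, so $\dim T_vY^\lambda_u=\dim Y^\lambda_u=\dim Y^0_u$. Thus it suffices to prove $\dim Y^0_u=r^2(g-1)+1$.

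Next I would carry out the weight computation. Write $u=(V=\bigoplus_iV_i,\theta)$ as a $\mathbb{C}$-VHS, and let $C^\bullet=\bigl[\mathrm{End}(V)\xrightarrow{\mathrm{ad}_\theta}\mathrm{End}(V)\otimes K_X\bigr]$ be its deformation complex, so $T_u\mathbb{M}_{\mathrm{Dol}}(X,r)=\mathbb{H}^1(C^\bullet)$. The grading $\mathrm{End}(V)=\bigoplus_w\mathrm{End}(V)_w$ with $\mathrm{End}(V)_w=\bigoplus_i\mathrm{Hom}(V_i,V_{i+w})$ refines $C^\bullet$ into $\bigoplus_wC^\bullet_w$ with $C^\bullet_w=\bigl[\mathrm{End}(V)_{-w}\to\mathrm{End}(V)_{1-w}\otimes K_X\bigr]$, and $T_u\mathbb{M}_{\mathrm{Dol}}(X,r)=\bigoplus_w\mathbb{H}^1(C^\bullet_w)$ is exactly the decomposition into $\mathbb{C}^*$-weight spaces. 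The Bialynicki--Birula analysis at the fixed point $u$ identifies $T_uY^0_u$ with the \emph{strictly} positive part $\bigoplus_{w\geq1}\mathbb{H}^1(C^\bullet_w)$: the weight-zero part is tangent to the fixed component $V_\alpha$ through $u$, along which the limit is not $u$, and negative weights do not flow into $u$. On the other hand, since $u$ is stable, $\mathbb{H}^0(C^\bullet_w)=\mathbb{H}^2(C^\bullet_w)=0$ for all $w\neq0$ (the only infinitesimal Higgs automorphisms are scalars, which lie in weight $0$), so $\dim\mathbb{H}^1(C^\bullet_w)=-\chi(C^\bullet_w)$ for $w\neq0$; and a short Riemann--Roch calculation, using $\mathrm{rk}\,\mathrm{End}(V)_w=\mathrm{rk}\,\mathrm{End}(V)_{-w}$ and $\deg\mathrm{End}(V)_w=-\deg\mathrm{End}(V)_{-w}$, yields $\chi(C^\bullet_w)=\chi(C^\bullet_{1-w})$, hence $\dim\mathbb{H}^1(C^\bullet_w)=\dim\mathbb{H}^1(C^\bullet_{1-w})$ for all $w$ (this is the weight refinement of Serre duality, reflecting that the holomorphic symplectic form on $\mathbb{M}_{\mathrm{Dol}}(X,r)$ has $\mathbb{C}^*$-weight $1$). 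Substituting $w\mapsto1-w$ therefore gives $\sum_{w\geq1}\dim\mathbb{H}^1(C^\bullet_w)=\sum_{w\leq0}\dim\mathbb{H}^1(C^\bullet_w)$, and since the two sides add to $\dim\mathbb{M}_{\mathrm{Dol}}(X,r)=2r^2(g-1)+2$, we get $\dim T_uY^0_u=r^2(g-1)+1=\tfrac12\dim T_v\mathbb{M}^\lambda_{\mathrm{Hod}}(X,r)$.

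The delicate point, and the one I would be most careful about, is the ``off-by-one'' bookkeeping in the last step: Serre duality on the $\mathbb{C}^*$-weight spaces matches weight $w$ with weight $1-w$ rather than with weight $-w$, while $T_uY^0_u$ is the strictly positive (not the non-negative) weight part; it is the interplay of these two shifts that makes the Hodge fiber exactly half-dimensional, rather than off by $\tfrac12\dim V_\alpha$. (An equivalent variant is to work inside $\mathbb{M}_{\mathrm{Hod}}(X,r)$ directly: there $T_u\mathbb{M}_{\mathrm{Hod}}(X,r)\cong T_u\mathbb{M}_{\mathrm{Dol}}(X,r)\oplus\mathbb{C}_{(1)}$ as $\mathbb{C}^*$-representations, with $\mathbb{C}_{(1)}$ the weight-one line tangent to the $\lambda$-direction, and $T_uY_u$ is the strictly positive part; the extra weight-one summand is exactly what accounts for the $\lambda$-parameter and reproduces the count for $Y^\lambda_u$.) The remaining ingredients --- the vanishing $\mathbb{H}^0(C^\bullet_w)=\mathbb{H}^2(C^\bullet_w)=0$ for $w\neq0$ and the fact that the Hodge fibers over stable $\mathbb{C}$-VHSs are mutually isomorphic affine spaces --- are routine or already available in the literature.
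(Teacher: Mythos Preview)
Your argument is correct and takes a genuinely different route from the paper's. The paper works directly at an arbitrary $v\in Y_u^\lambda$ for $\lambda\neq0$: it uses the Simpson filtration on $(E,\lambda^{-1}D^\lambda)$ to filter the de Rham hypercohomology $\mathbb{H}^1(\Omega_X^\bullet(\End E),\lambda^{-1}D^\lambda)$, invokes Lemma~7.1 of \cite{CS6} to identify $T_vY_u^\lambda$ with the $F^1$-piece of that filtration, passes to the associated graded (which lands in the Dolbeault deformation complex of $u$), and then defers the actual dimension count to the Riemann--Roch computation of Lemma~3.6 in \cite{C}. You instead first reduce to the fixed point $u$ in $\mathbb{M}_{\mathrm{Dol}}$ via the conformal-limit/affine-space isomorphism $Y_u^\lambda\cong Y_u^0$ from \cite{C,H-H}, and then read off half-dimensionality from the Bialynicki--Birula weight decomposition together with the $w\leftrightarrow1-w$ Serre-duality symmetry of the Higgs deformation complex. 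Your route makes the mechanism transparent (the holomorphic symplectic form has $\mathbb{C}^*$-weight $1$, so positive and non-positive weights are paired), at the price of importing the conformal-limit isomorphism as a black box; the paper's route stays with the Simpson-filtration theme and works at every $v$, at the price of outsourcing both key steps to external references. One small slip to fix: $\mathbb{H}^2(C^\bullet_w)$ vanishes for $w\neq1$, not $w\neq0$, since Serre duality pairs $\mathbb{H}^2(C^\bullet_w)$ with $\mathbb{H}^0(C^\bullet_{1-w})$; this is harmless, as the extra $\mathbb{H}^2$-term at $w=1$ exactly matches the extra $\mathbb{H}^0$-term at $w=0$, and in any case your direct duality $\mathbb{H}^1(C^\bullet_w)^*\cong\mathbb{H}^1(C^\bullet_{1-w})$ already gives the needed symmetry without passing through Euler characteristics.
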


\begin{proof}
For our purpose, here we just show the case $\lambda\neq0$. 

Write $u=(\mathcal{E},\theta)=(\bigoplus_{i=1}^{k}E_{i},\bigoplus_{i=1}^{k-1}\theta_i:E_i\rightarrow E_{i+1}\otimes K_X)$, and let $v=(E,D^\lambda) \in  Y_u^\lambda$, namely there is a Simpson filtration $\mathcal{F}=\{F^\bullet\}: E=F^0\supset F^1\supset\cdots F^k=0$ on the flat bundle $(E,\lambda^{-1}D^\lambda)$ such that $(\mathrm{Gr}_{\mathcal{F}}(E),\mathrm{Gr}_{\mathcal{F}}(\lambda^{-1}D^\lambda))=u$. The Simpson filtration $\{F^\bullet\}$ produces a filtration  $\widetilde{\mathcal{F}}$ on $\End(E)$ by 
$$
F^p(\End(E))=\{\varphi\in \End(E): \varphi: F^q\rightarrow F^{p+q}\textrm{ for any }q\},
$$ 
hence the graded objects are given by
 $\mathfrak{E}^p:=\mathrm{Gr}_{\widetilde{\mathcal{F}}}^p(\End(E))=\bigoplus_{i=1}^{k}\Hom(E_i,E_{i+p})$.
Then there is  an  filtration on $\End(E)\otimes \Omega^1_X$ by 
$$
F^p(\End(E)\otimes \Omega^1_X)=F^{p-1}(\End(E))\otimes\Omega^1_X,
$$ 
which induces a filtration on the hypercohomology $\mathbb{H}^i(\Omega_X^\bullet(\End(E)),\lambda^{-1}D^{\lambda})$.
By Lemma 7.1 in \cite{CS6}, we have
\begin{align*}
T_{v}Y^\lambda_u\simeq&F^1(\mathbb{H}^1(\Omega_X^\bullet(\End(E)),\lambda^{-1}D^{\lambda}))\\
 \simeq&\bigoplus_{p= 1}^{k-1}\mathbb{H}^1(Gr^p_{\widetilde{\mathcal{F}}}(\End(E))\xrightarrow{\mathrm{Gr}_{\widetilde{\mathcal{F}}}(\lambda^{-1}D^{\lambda})}\mathrm{Gr}^{p-1}_{\widetilde{\mathcal{F}}}(\End(E))\otimes K_X)\\
 \simeq&\left\{(\alpha,\beta)\in\Omega^{0,1}_X(\bigoplus_{p=1}^{k-1}\mathfrak{E}^p)\oplus\Omega^{1,0}_X(\bigoplus_{p=0}^{k-1}\mathfrak{E}^p) :\bar\partial_{\mathcal{E}}\beta+[\theta,\alpha]
  =\partial_{\mathcal{E},h}\alpha+[\theta^\dagger_h,\beta]=0\right\},
  \end{align*}
  where $h$ is the pluri-harmonic metric on $(\mathcal{E},\theta)$.
The dimension of the last space can  be calculated by Riemann-Roch formula as done in Lemma 3.6 of \cite{C}.
\end{proof}

The following properties of $V^{\overrightarrow{r},\overrightarrow{d}}$ are crucial in the proof of our main theorem.

\begin{lemma} [{\cite[Theorem 4.1]{BGG}}, {\cite[Theorem 3.8 (iv)]{A}}]\label{irred}
Given type $\overrightarrow{r}=(r_1,\cdots,r_l),\overrightarrow{d}=(d_1,\cdots,d_{l})$,  we have
\begin{enumerate}
  \item if $V^{\overrightarrow{r},\overrightarrow{d}}$ is not empty, then it is irreducible;
  \item if $V^{\overrightarrow{r},\overrightarrow{d}}$ consists of stable $\mathbb{C}$-VHSs, then its dimension is given by
  $$
  \dim_\mathbb{C}V^{\overrightarrow{r},\overrightarrow{d}}=(g-1)\sum_{i=1}^lr_i(r_i+r_{i+1})+\sum_{i=1}^lr_i(d_{i+1}-d_{i-1})+1,
  $$
where one assigns $r_{l+1}=d_0=d_{l+1}=0$.
\end{enumerate}
\end{lemma}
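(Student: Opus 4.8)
The plan is to realize $V^{\overrightarrow{r},\overrightarrow{d}}$ as (the part of the coarse moduli space covered by) a moduli space of $\overrightarrow{\alpha}_{\mathrm{Higgs}}$-polystable holomorphic chains of the associated type, via the twist $\mathcal{E}_i=E_i\otimes K_X^{-(l-i+\delta)}$ explained just before Proposition \ref{11}; under this identification part (1) becomes an irreducibility statement for a stack of chains, and part (2) becomes a deformation-theoretic computation on the Dolbeault moduli space. Both assertions are due to \cite{BGG,A}, and I would reconstruct them as follows.

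For part (1): let $\mathcal{C}h^{\overrightarrow{r},\overrightarrow{d}}$ denote the stack of chains $(\mathcal{E}_i,\varphi_i)$ of fixed type with all $\varphi_i\neq 0$. Forgetting the $\varphi_i$ gives a morphism to $\prod_{i=1}^{l}\mathrm{Bun}_{r_i,d_i}(X)$; the target is irreducible, and the fibre over $(\mathcal{E}_1,\dots,\mathcal{E}_l)$ is a nonempty open subset of the vector space $\bigoplus_{i=1}^{l-1}\Hom(\mathcal{E}_i,\mathcal{E}_{i+1})$. Even though $\dim\Hom$ jumps, the total space of these relative homomorphism spaces is irreducible --- it is a vector bundle of the expected rank over the dense open locus where all $h^0(\Hom(\mathcal{E}_i,\mathcal{E}_{i+1}))$ are minimal, and the jumping loci (sitting over proper closed substacks of the base) contribute no extra component of top dimension. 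Hence $\mathcal{C}h^{\overrightarrow{r},\overrightarrow{d}}$ is irreducible; the $\overrightarrow{\alpha}_{\mathrm{Higgs}}$-semistable locus is an open substack, hence irreducible whenever nonempty, and $V^{\overrightarrow{r},\overrightarrow{d}}$ --- the image of this locus in the coarse moduli space, every $\mathbb{C}$-VHS arising from a suitable semisimplification as in the proof of Proposition \ref{11} --- is irreducible. (The route of \cite{BGG,A} is slightly different: one varies the stability parameter $\overrightarrow{\alpha}$ off $\overrightarrow{\alpha}_{\mathrm{Higgs}}$; on the stable locus the moduli spaces for parameters in adjacent chambers differ by flips with lower-dimensional centres, so irreducibility propagates, and in an extreme chamber the moduli space fibres over a product of moduli of stable bundles with iterated Grassmann- or extension-bundle fibres, all irreducible.)

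For part (2): assume every point of $V^{\overrightarrow{r},\overrightarrow{d}}$ is a stable $\mathbb{C}$-VHS, so that $V^{\overrightarrow{r},\overrightarrow{d}}\subset M_{\mathrm{Dol}}(X,r)$ and, being a union of connected components of the $\mathbb{C}^{*}$-fixed locus of a smooth variety, is smooth. At $(\mathcal{E}=\bigoplus_{i=1}^{l}E_i,\theta=\bigoplus_i\theta_i)$ its tangent space is the weight-zero summand of $\mathbb{H}^1\big(\End(\mathcal{E})\xrightarrow{[\theta,\cdot]}\End(\mathcal{E})\otimes K_X\big)$; using $\End(\mathcal{E})=\bigoplus_p\bigoplus_i\Hom(E_i,E_{i+p})$ and putting $\End^p(\mathcal{E})$ in weight $p$, this weight-zero part is $\mathbb{H}^1(C^\bullet)$ for
$$
C^\bullet:\quad \bigoplus_{i=1}^{l}\End(E_i)\ \xrightarrow{\ [\theta,\cdot]\ }\ \bigoplus_{i=1}^{l-1}\Hom\big(E_i,E_{i+1}\otimes K_X\big).
$$
I would then show $\mathbb{H}^0(C^\bullet)=\mathbb{C}$ (a stable $\mathbb{C}$-VHS is simple, and a grading-preserving endomorphism commuting with $\theta$ is a scalar) and $\mathbb{H}^2(C^\bullet)=0$ (by Serre duality $\mathbb{H}^2(C^\bullet)^{\vee}$ is a space of ``backward'' chain maps $E_{i+1}\to E_i$ compatible with $\theta$, which stability forces to vanish; equivalently, this is the smoothness of the moduli of stable chains). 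Therefore $V^{\overrightarrow{r},\overrightarrow{d}}$ is smooth of dimension $\dim\mathbb{H}^1(C^\bullet)=1-\chi(C^\bullet)$. Finally Riemann--Roch, together with $\deg\End(E_i)=0$ and $\deg\Hom(E_i,E_{i+1}\otimes K_X)=r_id_{i+1}-r_{i+1}d_i+r_ir_{i+1}(2g-2)$, gives
$$
-\chi(C^\bullet)=(g-1)\sum_{i=1}^{l}r_i^2+(g-1)\sum_{i=1}^{l-1}r_ir_{i+1}+\sum_{i=1}^{l-1}\big(r_id_{i+1}-r_{i+1}d_i\big),
$$
which, after reindexing with the conventions $r_{l+1}=d_0=d_{l+1}=0$, equals $(g-1)\sum_{i}r_i(r_i+r_{i+1})+\sum_{i}r_i(d_{i+1}-d_{i-1})$; adding $1$ yields the asserted formula.

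The main difficulty is part (1): because $\overrightarrow{\alpha}_{\mathrm{Higgs}}$ need not be generic, $V^{\overrightarrow{r},\overrightarrow{d}}$ can contain strictly polystable points and need not be smooth, so one cannot simply invoke smoothness plus connectedness; and the jumping of $\dim\Hom(\mathcal{E}_i,\mathcal{E}_{i+1})$ over $\prod\mathrm{Bun}_{r_i,d_i}(X)$ (or, in the wall-crossing version, the bookkeeping of the flip loci) is exactly what must be controlled. In part (2) the only non-formal input is the vanishing $\mathbb{H}^2(C^\bullet)=0$, where stability enters in an essential rather than purely numerical way.
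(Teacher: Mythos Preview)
The paper does not prove this lemma at all: it is stated with attributions to \cite[Theorem~4.1]{BGG} and \cite[Theorem~3.8~(iv)]{A} and then used as a black box. Your proposal therefore already does more than the paper, by sketching how the cited results are obtained.

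Your reconstruction of part~(2) is the standard argument and matches \cite{A}: identify the tangent space at a stable $\mathbb{C}$-VHS with the weight-zero summand of $\mathbb{H}^1$ of the deformation complex, use stability to get $\mathbb{H}^0(C^\bullet)=\mathbb{C}$ and $\mathbb{H}^2(C^\bullet)=0$, and finish with Riemann--Roch. Nothing to add there.

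For part~(1) there is a small but genuine gap in your primary sketch. From ``the jumping loci contribute no extra component \emph{of top dimension}'' you conclude ``$\mathcal{C}h^{\overrightarrow{r},\overrightarrow{d}}$ is irreducible'', but in general a space can fail to be irreducible by acquiring lower-dimensional components; ruling out top-dimensional extras is not enough. What one actually needs is that every chain lies in the closure of the generic locus (equivalently, that the total $\Hom$-space is irreducible as a scheme, not merely equidimensional on its top-dimensional part), and establishing this is exactly the content of the careful codimension estimates in \cite{BGG}. Your parenthetical alternative --- vary $\overrightarrow{\alpha}$ off the Higgs wall, use that adjacent chambers differ by flips with lower-dimensional centres, and anchor irreducibility in an extreme chamber where the moduli space visibly fibres in Grassmann/extension bundles over products of stable-bundle moduli --- is in fact the route taken in \cite{BGG}, and is the cleaner way to close the gap.
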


Applying Lemma \ref{irred}, we can obtain the following positivity of codimension of $V^{\overrightarrow{r},\overrightarrow{d}}_{\mathrm{ss}}$.

\begin{proposition}\label{ff}
Let $V^{\overrightarrow{r},\overrightarrow{d}}_\mathrm{s}\subset V^{\overrightarrow{r},\overrightarrow{d}}$ be the subset consisting of stable $\mathbb{C}$-VHSs, and let $V^{\overrightarrow{r},\overrightarrow{d}}_{\mathrm{ss}}=V^{\overrightarrow{r},\overrightarrow{d}}\backslash V^{\overrightarrow{r},\overrightarrow{d}}_\mathrm{s}$. Assume $V^{\overrightarrow{r},\overrightarrow{d}}_\mathrm{s}$ and $V^{\overrightarrow{r},\overrightarrow{d}}_{\mathrm{ss}}$ are both non-empty, then
then the codimension of $V^{\overrightarrow{r},\overrightarrow{d}}_{\mathrm{ss}}$ in $V^{\overrightarrow{r},\overrightarrow{d}}$ is positive.
\end{proposition}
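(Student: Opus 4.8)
The plan is to prove positivity of the codimension by exhibiting, inside the irreducible variety $V^{\overrightarrow{r},\overrightarrow{d}}$, at least one point that lies in $V^{\overrightarrow{r},\overrightarrow{d}}_{\mathrm{s}}$; since $V^{\overrightarrow{r},\overrightarrow{d}}$ is irreducible (Lemma \ref{irred}(1)) and $V^{\overrightarrow{r},\overrightarrow{d}}_{\mathrm{ss}}$ is a proper closed subset (it is cut out by the existence of a destabilizing sub-$\mathbb{C}$-VHS, a closed condition), any proper closed subset of an irreducible variety has strictly positive codimension. So the real content is to show $V^{\overrightarrow{r},\overrightarrow{d}}_{\mathrm{s}} \neq V^{\overrightarrow{r},\overrightarrow{d}}$ under the hypothesis that both $V^{\overrightarrow{r},\overrightarrow{d}}_{\mathrm{s}}$ and $V^{\overrightarrow{r},\overrightarrow{d}}_{\mathrm{ss}}$ are non-empty --- but this is immediate: the hypothesis that $V^{\overrightarrow{r},\overrightarrow{d}}_{\mathrm{ss}}$ is non-empty is exactly the statement that $V^{\overrightarrow{r},\overrightarrow{d}}_{\mathrm{s}}$ is a proper subset, so once we know $V^{\overrightarrow{r},\overrightarrow{d}}_{\mathrm{ss}}$ is closed and $V^{\overrightarrow{r},\overrightarrow{d}}$ is irreducible we are done.

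First I would set up the stratification carefully. Fix the type $(\overrightarrow{r},\overrightarrow{d})$ and recall from Proposition \ref{11} that $V^{\overrightarrow{r},\overrightarrow{d}}$ is the locus of $\mathbb{C}$-VHSs $(E=\bigoplus_{i=1}^l E_i,\theta)$ of that type with all $\theta_i\neq 0$; equivalently, via the dictionary recalled before Proposition \ref{11}, these correspond to $\overrightarrow{\alpha}_{\mathrm{Higgs}}$-semistable indecomposable chains of the shifted type. The subset $V^{\overrightarrow{r},\overrightarrow{d}}_{\mathrm{ss}}$ consists of those polystable-but-not-stable $\mathbb{C}$-VHSs, i.e. those admitting a proper sub-$\mathbb{C}$-VHS of the same slope $0$. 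I would argue that this is a closed condition: semicontinuity of $\dim \mathrm{Hom}$ in families, together with the fact that a destabilizing subobject of slope $0$ necessarily has one of finitely many types $(\overrightarrow{r'},\overrightarrow{d'})$ with $0<|\overrightarrow{r'}|<|\overrightarrow{r}|$ and $|\overrightarrow{d'}|=0$ (finiteness of the possible numerical invariants because bounded degree ranges are forced, e.g. by the inequalities in Proposition \ref{1} applied to both sub- and quotient-chains). Hence $V^{\overrightarrow{r},\overrightarrow{d}}_{\mathrm{ss}}$ is a finite union of images of Hom-bundle loci over products of chain moduli of smaller type, each of which is closed in $V^{\overrightarrow{r},\overrightarrow{d}}$.

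The key step is then the combination: $V^{\overrightarrow{r},\overrightarrow{d}}$ is irreducible by Lemma \ref{irred}(1) (which applies precisely when $V^{\overrightarrow{r},\overrightarrow{d}}$ is non-empty, and this is guaranteed here since $V^{\overrightarrow{r},\overrightarrow{d}}_{\mathrm{s}}\cup V^{\overrightarrow{r},\overrightarrow{d}}_{\mathrm{ss}}$ is non-empty); $V^{\overrightarrow{r},\overrightarrow{d}}_{\mathrm{ss}}$ is closed by the previous step; and $V^{\overrightarrow{r},\overrightarrow{d}}_{\mathrm{ss}}$ is a \emph{proper} subset, because $V^{\overrightarrow{r},\overrightarrow{d}}_{\mathrm{s}}$ is assumed non-empty and disjoint from it. A nonempty proper closed subvariety of an irreducible variety has dimension strictly less than the ambient dimension, so $\mathrm{codim}_{V^{\overrightarrow{r},\overrightarrow{d}}} V^{\overrightarrow{r},\overrightarrow{d}}_{\mathrm{ss}} \geq 1$.

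The main obstacle is the closedness of $V^{\overrightarrow{r},\overrightarrow{d}}_{\mathrm{ss}}$ --- more precisely, controlling the numerical types of possible destabilizing sub-$\mathbb{C}$-VHSs so that only finitely many strata appear and each is genuinely closed (not merely constructible). I expect to handle this by noting that any sub-$\mathbb{C}$-VHS $(E',\theta')\subset(E,\theta)$ is itself a chain and is a subobject of an $\overrightarrow{\alpha}_{\mathrm{Higgs}}$-semistable chain of fixed type, so each graded piece $E'_i\subset E_i$ has degree squeezed between $\deg E_i$ and a lower bound coming from $\overrightarrow{\alpha}$-semistability of $E$ (the argument is parallel to Proposition \ref{1}); this pins $\deg E'_i$ to a finite set, hence bounds the types. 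Once finiteness and properness (via semicontinuity of $\hom$) are in hand, the irreducibility input from Lemma \ref{irred} closes the argument with no further computation.
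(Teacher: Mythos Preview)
Your argument is correct and takes a genuinely different, more qualitative route than the paper. The paper proceeds by stratifying $V^{\overrightarrow{r},\overrightarrow{d}}_{\mathrm{ss}}$ according to partitions $\{(\overrightarrow{r}^{(a)},\overrightarrow{d}^{(a)})\}$ of the type, writing its dimension as a maximum over such partitions, and then explicitly computing the codimension using the dimension formula of Lemma~\ref{irred}(2) together with the numerical inequalities of Proposition~\ref{11}; this yields an explicit expression $(g-1)(2r_m+r_{m+1}+r_{m-1})+d_{m+1}-d_{m-1}+1$ which is then bounded below by $1$ case by case. By contrast, you bypass all computation: since $V^{\overrightarrow{r},\overrightarrow{d}}$ is irreducible (Lemma~\ref{irred}(1)) and $V^{\overrightarrow{r},\overrightarrow{d}}_{\mathrm{s}}$ is open and nonempty, its complement is a proper closed subset and hence has strictly smaller dimension. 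The paper's approach buys an explicit codimension bound (not subsequently used), while yours is shorter and avoids both the dimension formula and the chain inequalities.

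One simplification you should make: the ``main obstacle'' you flag --- closedness of $V^{\overrightarrow{r},\overrightarrow{d}}_{\mathrm{ss}}$ --- is not an obstacle at all, and your semicontinuity/finiteness argument is unnecessary. The stable locus $M_{\mathrm{Dol}}(X,r)$ is open in the coarse moduli space $\mathbb{M}_{\mathrm{Dol}}(X,r)$ (this is the standard fact that stability is an open condition, already used in the paper), so $V^{\overrightarrow{r},\overrightarrow{d}}_{\mathrm{s}} = V^{\overrightarrow{r},\overrightarrow{d}}\cap M_{\mathrm{Dol}}(X,r)$ is open in $V^{\overrightarrow{r},\overrightarrow{d}}$ and its complement $V^{\overrightarrow{r},\overrightarrow{d}}_{\mathrm{ss}}$ is closed. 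No bound on destabilizing types is needed.
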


\begin{proof}
Given a  partition $\{\overrightarrow{r}^{(a)},\overrightarrow{d}^{(a)}\}_{a=1}^k$ with 
\begin{align*}
\sum_{a=1}^k\overrightarrow{r}^{(a)}&=\overrightarrow{r},\quad \sum_{a=1}^k\overrightarrow{d}^{(a)}=\overrightarrow{d},\\
 |\overrightarrow{d}^{(a)}|&=0, \quad a=1,\cdots, k,
\end{align*}
one denotes $V^{\{\overrightarrow{r}^{(a)},\overrightarrow{d}^{(a)}\}}_{\mathrm{ss}}=V^{\{\overrightarrow{r}^{(1)},\overrightarrow{d}^{(1)}\}}_{\mathrm{ss}}\times \cdots \times V^{\{\overrightarrow{r}^{(k)},\overrightarrow{d}^{(k)}\}}_{\mathrm{ss}}$, and defines an injective map
$$
f_{\{\overrightarrow{r}^{(a)},\overrightarrow{d}^{(a)}\}}: V^{\{\overrightarrow{r}^{(a)},\overrightarrow{d}^{(a)}\}}_{\mathrm{ss}}\longrightarrow V^{\overrightarrow{r},\overrightarrow{d}}
$$
by taking direct sum of each component on the left hand side. Since  each point in $V^{\overrightarrow{r},\overrightarrow{d}}$ represents a polystable $\mathbb{C}$-VHS, we have
$$
\dim_\mathbb{C}V_{\mathrm{ss}}^{\overrightarrow{r},\overrightarrow{d}}=\dim_\mathbb{C}\bigcup_{\{\overrightarrow{r}^{(a)},\overrightarrow{d}^{(a)}\}}\mathrm{Im}(f_{\{\overrightarrow{r}^{(a)},\overrightarrow{d}^{(a)}\}})
=\max_{\{\overrightarrow{r}^{(a)},\overrightarrow{d}^{(a)}\}}\dim_\mathbb{C}V^{\{\overrightarrow{r}^{(a)},\overrightarrow{d}^{(a)}\}}_{\mathrm{ss}}.
$$
Since $V^{\overrightarrow{r},\overrightarrow{d}}_{\mathrm{ss}}$ is non-empty and our chains are indecomposable, the set $\{i:r_i>1\}$ is non-empty. We only care about the lower bound of $\mathrm{codim}_\mathbb{C}V^{\overrightarrow{r},\overrightarrow{d}}_{\mathrm{ss}}$, then it is clear that we can take $\overrightarrow{r}=\overrightarrow{r^\prime}+\overrightarrow{r^{\prime\prime}},\overrightarrow{d}=\overrightarrow{d^\prime}+\overrightarrow{d^{\prime\prime}}$ with $\overrightarrow{r^\prime}=(r_1,\cdots,r_{m-1},r_m-1,r_{m+1},\cdots,r_l), \overrightarrow{r^{\prime\prime}}=(0,\cdots,0,1,0,\cdots,0),\overrightarrow{d^\prime}=\overrightarrow{d},\overrightarrow{d^{\prime\prime}}=0$. From  the dimension formula in Lemma \ref{irred} (2)  it follows that
\begin{align*}
\mathrm{codim}_\mathbb{C}V^{\overrightarrow{r},\overrightarrow{d}}_{\mathrm{ss}}&=\dim_\mathbb{C}V^{\overrightarrow{r},\overrightarrow{d}}-\dim_\mathbb{C}\bigg(\max_{\{\overrightarrow{r}^{(a)},\overrightarrow{d}^{(a)}\}}\dim_\mathbb{C}V^{\{\overrightarrow{r}^{(a)},\overrightarrow{d}^{(a)}\}}_{\mathrm{ss}}\bigg)\\
&=(g-1)(2r_m+r_{m+1}+r_{m-1})+d_{m+1}-d_{m-1}+1.
\end{align*}
By Proposition \ref{11}, we have
\begin{align*}
  d_{m-1}-d_m&\leq(2g-2)\min
  \{r_{m-1},r_m\},\\
   d_{m}-d_{m+1}&\leq(2g-2)\min
  \{r_{m},r_{m+1}\},
\end{align*}
then we arrive at
$$\mathrm{codim}_\mathbb{C}V^{\overrightarrow{r},\overrightarrow{d}}_{\mathrm{ss}}\geq\left\{
                                                                                     \begin{array}{ll}
                                                                                      (g-1)(r_{m+1}+r_{m-1}-2r_m)+1 , & \hbox{$r_{m-1}\geq r_m, r_{m+1}\geq r_m$;} \\
                                                                                        (g-1)(2r_m-r_{m+1}-r_{m-1})+1 , & \hbox{$r_{m-1}\leq r_m, r_{m+1}\leq r_m$;}\\
 (g-1)(r_{m+1}-r_{m-1})+1 , & \hbox{$ r_{m+1}\geq r_m\geq r_{m-1}$;} \\
(g-1)(r_{m-1}-r_{m+1})+1 , & \hbox{$ r_{m+1}\leq r_m\leq r_{m-1}$,}
                                                                                     \end{array}
                                                                                   \right.
$$
which implies  $\mathrm{codim}_\mathbb{C}V^{\overrightarrow{r},\overrightarrow{d}}_{\mathrm{ss}}\geq1$.
\end{proof}

Now we give the lower and upper bounds of the dimension of non-empty irreducible components $V^{\overrightarrow{r},\overrightarrow{d}}$, and discuss when these bounds reach.

\begin{theorem} \label{m} 
If $V^{\overrightarrow{r},\overrightarrow{d}}$ is not empty, then we have the inequalities
$$
g\leq\dim_\mathbb{C}V^{\overrightarrow{r},\overrightarrow{d}}\leq r^2(g-1)+1.
$$
 In particular, the equality on the left hand side holds only when 
 $$
 \overrightarrow{r}=(1,\cdots,1),\overrightarrow{d}=((r-1)(g-1),(r-3)(g-1),\cdots,(-r+1)(g-1)),
 $$ 
and  the equality on the right hand side holds only when $\overrightarrow{r}=(r), \overrightarrow{d}=(0)$.
\end{theorem}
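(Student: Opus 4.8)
The plan is to derive both inequalities from the explicit dimension formula of Lemma~\ref{irred}(2) together with the numerical constraints of Proposition~\ref{11}. First I would dispose of the non-stable case: $V^{\overrightarrow{r},\overrightarrow{d}}$ is irreducible by Lemma~\ref{irred}(1), so if it contains a stable point then Proposition~\ref{ff} gives $\dim_{\mathbb{C}}V^{\overrightarrow{r},\overrightarrow{d}}=\dim_{\mathbb{C}}V^{\overrightarrow{r},\overrightarrow{d}}_{\mathrm s}$ and the formula of Lemma~\ref{irred}(2) applies, while if it has no stable point at all then every $\mathbb{C}$-VHS in it is a direct sum of stable $\mathbb{C}$-VHSs of strictly smaller rank and the bounds follow by induction on $r=|\overrightarrow{r}|$ using $\sum_a|\overrightarrow{r}^{(a)}|^2\le\big(\sum_a|\overrightarrow{r}^{(a)}|\big)^2$ with the evident gap when there are at least two summands. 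Thus it suffices to bound
\[
\dim_{\mathbb{C}}V^{\overrightarrow{r},\overrightarrow{d}}=1+(g-1)\Big(\sum_{i=1}^{l}r_i^2+\sum_{i=1}^{l-1}r_ir_{i+1}\Big)+\sum_{i=1}^{l-1}(r_id_{i+1}-r_{i+1}d_i),
\]
where I have first rewritten $\sum_i r_i(d_{i+1}-d_{i-1})$ in the telescoped form $\sum_{i=1}^{l-1}(r_id_{i+1}-r_{i+1}d_i)=-\sum_{i=1}^{l-1}r_ir_{i+1}\big(\tfrac{d_i}{r_i}-\tfrac{d_{i+1}}{r_{i+1}}\big)$.

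For the upper bound, since $r^2-\sum_i r_i^2=2\sum_{i<j}r_ir_j$, the inequality $\dim_{\mathbb{C}}V^{\overrightarrow{r},\overrightarrow{d}}\le r^2(g-1)+1$ is equivalent to
\[
\sum_{i=1}^{l-1}(r_id_{i+1}-r_{i+1}d_i)\ \le\ (g-1)\Big(\sum_{i=1}^{l-1}r_ir_{i+1}+2\!\!\sum_{1\le i,\ i+2\le j\le l}\!\!r_ir_j\Big).
\]
I would introduce the partial sums $P_m=\sum_{i=1}^m d_i$, so that $P_0=P_l=0$ and $P_m>0$ for $0<m<l$ by \eqref{2.1} together with $|\overrightarrow{d}|=0$, rewrite the left side by summation by parts as $\sum_{m=1}^{l-1}(r_{m-1}-r_m-r_{m+1}+r_{m+2})P_m$, and estimate block by block: on indices where the coefficient $r_{m-1}-r_m-r_{m+1}+r_{m+2}$ is $\le0$ one only needs $P_m>0$, and on the complementary indices one bounds $P_m$ from above by iterating the slope inequality \eqref{2.2} along the chain and invoking \eqref{2.4.3}--\eqref{2.4} on the subchains where the rank sequence has a strict interior minimum or maximum. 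Tracking when all these estimates are simultaneously equalities shows the value $r^2(g-1)+1$ is attained only for $l=1$, i.e. $\overrightarrow{r}=(r),\ \overrightarrow{d}=(0)$.

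For the lower bound the strategy mirrors this: $\dim_{\mathbb{C}}V^{\overrightarrow{r},\overrightarrow{d}}\ge g$ is equivalent to $\sum_{i=1}^{l-1}(r_{i+1}d_i-r_id_{i+1})\le(g-1)\big(\sum_i r_i^2+\sum_{i=1}^{l-1}r_ir_{i+1}-1\big)$, and one notes that $\sum_i r_i^2-\sum_{i=1}^{l-1}r_ir_{i+1}=\tfrac12\big(r_1^2+r_l^2+\sum_{i=1}^{l-1}(r_i-r_{i+1})^2\big)\ge1$, with equality exactly when all $r_i=1$. Bounding each transition via \eqref{2.2} in the form $r_{i+1}d_i-r_id_{i+1}\le(2g-2)\min(r_i,r_{i+1})^2+(r_{i+1}-r_i)\widetilde d_i$ (with $\widetilde d_i$ equal to $d_i$ or $d_{i+1}$ according as $r_i<r_{i+1}$ or $r_i>r_{i+1}$), noting that $2\sum_i\min(r_i,r_{i+1})^2\le\sum_i r_i^2+\sum_{i=1}^{l-1}r_ir_{i+1}-1$, and absorbing the residual $\sum_i(r_{i+1}-r_i)\widetilde d_i$ using the positivity of the $P_m$ and \eqref{2.4.3}--\eqref{2.4}, one obtains $\dim_{\mathbb{C}}V^{\overrightarrow{r},\overrightarrow{d}}\ge g$. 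The equality analysis is then forced: all transitions must be extremal, which together with $\sum_i r_i^2=l$ gives $r_i=1$ for all $i$, $l=r$, and $d_i-d_{i+1}=2g-2$; combined with $\sum_i d_i=0$ this pins down $\overrightarrow{d}=((r-1)(g-1),(r-3)(g-1),\dots,(-r+1)(g-1))$.

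The step I expect to be the genuine obstacle is this combinatorial optimization: $\sum_{i=1}^{l-1}(r_id_{i+1}-r_{i+1}d_i)$ is a linear functional on the polytope of admissible $(\overrightarrow{r},\overrightarrow{d})$ cut out by Proposition~\ref{11}, but because the rank sequence need not be monotone the coefficients in the summed-by-parts expression change sign in a way that depends on the valley/peak structure of $\overrightarrow{r}$, so one must feed into each block exactly the right subchain inequality among \eqref{2.1}--\eqref{2.4}, and the work is bookkeeping rather than a single idea. As a conceptual check and the bridge to Corollary~\ref{mainthm}, I would record that every $\mathbb{C}$-VHS lies in the global nilpotent cone of $\mathbb{M}_{\mathrm{Dol}}(X,r)$, a Lagrangian subvariety of dimension $r^2(g-1)+1$, which re-proves the non-strict upper bound, and that $\dim S_\alpha=\dim V_\alpha+r^2(g-1)+1$ by Lemma~\ref{halfdim}.
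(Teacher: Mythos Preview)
Your route and the paper's diverge almost completely. For the \emph{upper bound} the paper does not touch the dimension formula at all: it simply observes that every $V^{\overrightarrow{r},\overrightarrow{d}}$ sits inside the global nilpotent cone, whose irreducible components are Lagrangian in $\mathbb{M}_{\mathrm{Dol}}(X,r)$ (Laumon, Faltings, Beilinson--Drinfeld, Ginzburg), so $\dim_{\mathbb C}V^{\overrightarrow{r},\overrightarrow{d}}\le r^2(g-1)+1$ immediately; strictness for $l\ge2$ is then quoted from Simpson's Lemma~11.9 in \cite{CSaa}. In other words, what you relegate to a ``conceptual check'' at the end is the paper's entire argument, and the combinatorial optimization you set up is never attempted there. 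For the \emph{lower bound} the paper splits into two cases: when $\overrightarrow{r}=(1,\dots,1)$ it does exactly your computation with the dimension formula and \eqref{2.2}; when some $r_i>1$ it bypasses the formula and argues at the stack level that forgetting down to a single $\mathrm{Bun}^{r_{i_0},d_{i_0}}(X)$ already forces $\dim_{\mathbb C}V^{\overrightarrow{r},\overrightarrow{d}}\ge r_{i_0}^2(g-1)+1>g$.

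Your combinatorial program is not wrong in spirit, but it is not finished. The summation-by-parts identity is correct, and your transition bound $d_i-d_{i+1}\le(2g-2)\min(r_i,r_{i+1})$ does follow from \eqref{2.2}--\eqref{2.4} with $j=k+1$. The step that is genuinely incomplete is ``absorbing the residual $\sum_i(r_{i+1}-r_i)\tilde d_i$'': this term is not $\le 0$ in general (already for $l=2$, $r_1=1$, $r_2=2$ it equals $d_1>0$), so you must actually produce the matching slack from the rank inequality $2\sum\min(r_i,r_{i+1})^2\le\sum r_i^2+\sum r_ir_{i+1}-1$ and the longer-range constraints \eqref{2.4.3}--\eqref{2.4}, and this bookkeeping over the valley/peak structure of $\overrightarrow{r}$ is precisely the part you leave unwritten for both bounds. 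If you want to carry your approach through, that is the lemma to state and prove carefully; the paper simply sidesteps it via the Lagrangian and stack arguments, at the cost of importing heavier geometry (the Lagrangian property of the nilpotent cone and Simpson's strictness lemma).
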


\begin{proof}
Firstly we show the inequality $\dim_\mathbb{C}V^{\overrightarrow{r},\overrightarrow{d}}\geq g$, and the equality holds if and only if 
 $
 \overrightarrow{r}=(1,\cdots,1),\overrightarrow{d}=((r-1)(g-1),(r-3)(g-1),\cdots,(-r+1)(g-1)),
 $. 
 
 Consider the case $\overrightarrow{r}=(1,\cdots,1)$, under which, the $\mathbb{C}$-VHSs lying in $V^{\overrightarrow{r},\overrightarrow{d}}$ are stable. By above half-dimension property (Lemma \ref{halfdim}), we have \begin{align*}
\dim_\mathbb{C}V^{\overrightarrow{r},\overrightarrow{d}}&=(g-1)(2r-1)+\sum_{i=1}^{r-1}(d_{i+1}-d_i)+1\\
              &\geq(g-1)(2r-1)-(r-1)(2g-2)+1=g,
\end{align*}
where we have used Proposition \ref{11} for the second inequality. In particular, the equality holds  if and only if $\overrightarrow{d}=((r-1)(g-1),(r-3)(g-1),\cdots,(-r+1)(g-1))$.

For the case when there exists some $r_i>1$,  since stability is an open condition, one can consider our problem at the level of stack. Let $\mathcal{V}^{\overrightarrow{r},\overrightarrow{d}}$ be the corresponding moduli stack, and let $\mathcal{V}^{\overrightarrow{r},\overrightarrow{d}}_{\mathrm{s}}$ be the  substack consisting of stable objects.  It follows from  $\mathcal{V}^{\overrightarrow{r},\overrightarrow{d}}_{\mathrm{s}}$  being a $\mathbb{G}_m$-gerbe over its coarse moduli space $V^{\overrightarrow{r},\overrightarrow{d}}_{\mathrm{s}}$ \cite{PHS} that $\dim_\mathbb{C}V^{\overrightarrow{r},\overrightarrow{d}}\geq\dim_\mathbb{C}\mathcal{V}'+1$, where $\mathcal{V}'$ is a substack of $\mathcal{V}^{\overrightarrow{r},\overrightarrow{d}}$  containing $\mathrm{Bun}^{r_{i_0},d_{i_0}}(X)$ for some $r_{i_0}>1, d_{i_0}$. Here $\mathrm{Bun}^{1,d_{i_0}}(X)\times\mathrm{Bun}^{1,d_{i_1}}(X)$ for some $d_{i_0}, d_{i_1}$, where $\mathrm{Bun}^{r_i,d_i}(X)$ denotes the moduli stack of vector bundles of rank $r_i$ and degree $d_i$ over $X$. Therefore, we have 
$$
\dim_\mathbb{C}V^{\overrightarrow{r},\overrightarrow{d}}\geq\max\big\{r_{i_0}^2(g-1)+1,2g-1\big\}>g.
$$

Now we show the inequality $\dim_\mathbb{C}V^{\overrightarrow{r},\overrightarrow{d}}\leq r^2(g-1)+1$, and the equality holds only when $\overrightarrow{r}=(r),\overrightarrow{d}=(0)$. 

Note that all $V^{\overrightarrow{r},\overrightarrow{d}}$ are contained in the nilpotent cone, so by Lemma \ref{irred}, each $V^{\overrightarrow{r},\overrightarrow{d}}$ is contained in certain irreducible component of the nilpotent cone. It is known that each such irreducible component is a Lagrangian submanifold of $\mathbb{M}_{\mathrm{Dol}}(X,r)$, this is due to Laumon \cite{Lau}, Faltings \cite{Fal}, Beilinson--Drinfeld \cite{BB2}, and Ginzburg \cite{Gin}. So the inequality $\dim_\mathbb{C}V^{\overrightarrow{r},\overrightarrow{d}}\leq r^2(g-1)+1$ holds. On the other hand, by Lemma 11.9 of \cite{CSaa}, the dimension of $V^{\overrightarrow{r},\overrightarrow{d}}$ is strictly less than $r^2(g-1)+1$, the dimension of the nilpotent cone, if $V^{\overrightarrow{r},\overrightarrow{d}}$ contains $\mathbb{C}$-VHSs with non-zero Higgs fields. Therefore, the equality holds only when $V^{\overrightarrow{r},\overrightarrow{d}}=V^{(r),(0)}=i(\mathbb{U}(X,r))$, where $\mathbb{U}(X,r)$ is the moduli space of semistable vector bundles over $X$ of rank $r$ and degree $0$, and $i: \mathbb{U}(X,r)\hookrightarrow\mathbb{M}_{\mathrm{Dol}}(X,r)$ is the natural embedding. 
\end{proof}

\begin{example}
Here we give an explicit description of $V^{\overrightarrow{r},\overrightarrow{d}}$ for the cases $|\overrightarrow{r}|=\sum_{i=1}^lr_i=3, 4$.
\begin{enumerate}
\item When $|\overrightarrow{r}|=\sum_{i=1}^lr_i=3$, then $V^{\overrightarrow{r},\overrightarrow{d}}$ can be described as the following:

Case I: $\overrightarrow{r}=(3),\overrightarrow{d}=(0)$. In this case, $V^{\overrightarrow{r},\overrightarrow{d}}=i(\mathbb{U}(X,3))$,  which is known to be irreducible of dimension $9g-8$.

Case II:  $\overrightarrow{r}=(1,1,1),\overrightarrow{d}=(d_1,d_2,-d_1-d_2)$. We have the isomorphism \cite{P1,G}
\begin{align*}
 V^{\overrightarrow{r},\overrightarrow{d}}&\simeq \mathrm{Jac}^{d_1}(X)\times S^{d_2-d_1+2g-2}X\times S^{-d_1-2d_2+2g-2}X\\
 (E_1,E_2,E_3;\theta_1,\theta_2)&\mapsto(E_1,\mathrm{ div}(\theta_1),\mathrm{div}(\theta_2)),
\end{align*}
where $\mathrm{Jac}^{d_1}(X)$ is the moduli space of line bundles over $X$ of degree $d_1$, $S^pX$ denotes the space of effective divisors of degree $p$ in $X$ which is isomorphic to $X^p/S_p$ for the symmetry group $S_p$, and $\mathrm{ div}(\theta_i)$ stands for the effective divisor of the morphism $\theta_i$. 

Case III: $\overrightarrow{r}=(1,2),\overrightarrow{d}=(d_1,-d_1)$. If $0<d_1<g-1$, then $V^{\overrightarrow{r},\overrightarrow{d}}_{\mathrm{s}}$  is non-empty, and  it is birationally equivalent to a $\mathbb{P}^N$-fibration over $\mathrm{Jac}^{-2d_1+2g-2}(X)\times \mathrm{Jac}^{d_1-2g+2}(X)$ for $N=-3d_1+5g-6$ \cite{B,P1}. If $d_1=g-1$, then $V^{\overrightarrow{r},\overrightarrow{d}}_{\mathrm{s}}$  is empty, and $V^{\overrightarrow{r},\overrightarrow{d}}\simeq \mathrm{Jac}^{0}(X)\times\mathrm{Jac}^{-g+1}(X)$ \cite{B}.

Case IV: $\overrightarrow{r}=(2,1),\overrightarrow{d}=(d_1,-d_1)$. If $0<d_1<g-1$, then $V^{\overrightarrow{r},\overrightarrow{d}}_{\mathrm{s}}$  is non-empty, and it is birationally equivalent to a $\mathbb{P}^N$-fibration over $\mathrm{Jac}^{2d_1-2(2g-2)}(X)\times \mathrm{Jac}^{-d_1}(X)$ \cite{B,P1}. If $d_1=g-1$, then $V^{\overrightarrow{r},\overrightarrow{d}}_{\mathrm{s}}$  is empty, and $V^{\overrightarrow{r},\overrightarrow{d}}\simeq \mathrm{Jac}^{-2g+2}(X)\times\mathrm{Jac}^{-g+1}(X)$ \cite{B}.

Therefore, we have the dimension formula
$$
\dim_\mathbb{C}V^{\overrightarrow{r},\overrightarrow{d}}
      =\left\{
            \begin{array}{ll}
               9g-8, &  \hbox{$\overrightarrow{r}=(3), \overrightarrow{d}=(0) $;}\\
               5g-4-2d_1-d_2, & \hbox{$\overrightarrow{r}=(1,1,1),\overrightarrow{d}=(d_1,d_2,-d_1-d_2)$;} \\
               7g-6-3d_1& \hbox{$\overrightarrow{r}=(1,2)$ or $(2,1)$,$\overrightarrow{d}=(d_1,-d_1)$, $0<d_1<g-1$;}\\
               2g, & \hbox{$\overrightarrow{r}=(1,2)$ or $(2,1)$,$\overrightarrow{d}=(g-1,-g+1)$}.
           \end{array}
   \right.
$$

Then it follows  from Proposition \ref{11} that we have the inequalities
$$0<2d_1+d_2\leq\left\{
              \begin{array}{ll}
                4g-4, & \hbox{$\overrightarrow{r}=(1,1,1),\overrightarrow{d}=(d_1,d_2,-d_1-d_2)$;} \\
                g-1, & \hbox{$\overrightarrow{r}=(1,2)$ or $(2,1)$,$\overrightarrow{d}=(d_1,d_2=-d_1)$,}
              \end{array}
            \right.
$$
which lead to the  inequalities on $\dim_\mathbb{C}V^{\overrightarrow{r},\overrightarrow{d}}$ as
$$
\left\{
 \begin{array}{ll}
 \qquad \qquad\ \dim_\mathbb{C}V^{\overrightarrow{r},\overrightarrow{d}}=9g-8, \ \ &  \hbox{$\overrightarrow{r}=(3),\overrightarrow{d}=(0) $;}\\
 \quad\quad\ g\leq \dim_\mathbb{C}V^{\overrightarrow{r},\overrightarrow{d}}<5g-4, \ \ & \hbox{$\overrightarrow{r}=(1,1,1),\overrightarrow{d}=(d_1,d_2,-d_1-d_2)$;} \\
4g-3\leq \dim_\mathbb{C}V^{\overrightarrow{r},\overrightarrow{d}}<7g-6, \ \ & \hbox{$\overrightarrow{r}=(1,2)$ or $(2,1)$,$\overrightarrow{d}=(d_1,-d_1)$.}                                                                                       
\end{array}
\right.
$$ 

\item Similarly when $|\overrightarrow{r}|=\sum_{i=1}^lr_i=4$, then $V^{\overrightarrow{r},\overrightarrow{d}}$ can be described as the following:

Case I: $\overrightarrow{r}=(4),\overrightarrow{d}=(0)$. In this case, $V^{\overrightarrow{r},\overrightarrow{d}}=i(\mathbb{U}(X,4))$, which is known to be irreducible of dimension $16g-15$.

Case II:  $\overrightarrow{r}=(1,1,1,1),\overrightarrow{d}=(d_1,d_2,d_3,d_4=-d_1-d_2-d_3)$. We have the isomorphism
\begin{align*}
 V^{\overrightarrow{r},\overrightarrow{d}}&\simeq \mathrm{Jac}^{d_1}(X)\times S^{d_2-d_1+2g-2}X\times S^{d_3-d_2+2g-2}X\times S^{-d_1-d_2-2d_3+2g-2}X\\
 (E_1,E_2,E_3,E_4;\theta_1,\theta_2,\theta_3)&\mapsto(\mathcal{E}_1,\mathrm{ div}(\theta_1),\mathrm{div}(\theta_2),\mathrm{div}(\theta_3)).
\end{align*}

Case III: $\overrightarrow{r}=(1,3),\overrightarrow{d}=(d_1,-d_1)$.   If $0<d_1<g-1$, then $V^{\overrightarrow{r},\overrightarrow{d}}_{\mathrm{s}}$  is non-empty, and it is birationally equivalent to a $\mathbb{P}^N$-fibration over $U(X,2,-2d_1+2g-2)\times\mathrm{ Jac}^{d_1-2g+2}(X)$ for $N=-4d_1+8g-9$  \cite{B}, where $U(X,r,d)$ denotes the moduli space of stable vector bundles over $X$ of rank $r$ and degree $d$.  If $d_1=g-1$, then $V^{\overrightarrow{r},\overrightarrow{d}}_{\mathrm{s}}$  is empty, and
$V^{\overrightarrow{r},\overrightarrow{d}}\simeq \mathbb{U}(X,2)\times\mathrm{ Jac}^{-g+1}(X)$ \cite{B}.

Case IV: $\overrightarrow{r}=(3,1),\overrightarrow{d}=(d_1,-d_1)$. If $0<d_1<g-1$, then $V^{\overrightarrow{r},\overrightarrow{d}}_{\mathrm{s}}$  is non-empty, and it is birationally equivalent to a $\mathbb{P}^N$-fibration over $U(X,2,2d_1-6g+6)\times \mathrm{Jac}^{-d_1}(X)$  \cite{B}. If $d_1=g-1$, then $V^{\overrightarrow{r},\overrightarrow{d}}_{\mathrm{s}}$  is empty, and
$V^{\overrightarrow{r},\overrightarrow{d}}\simeq \mathbb{U}(X,2,-4g+4)\times \mathrm{Jac}^{-g+1}(X)$ \cite{B}, where $\mathbb{U}(X,r,d)$ denotes the moduli space of semistable vector bundles over $X$ of rank $r$ and degree $d$.

Case V: $\overrightarrow{r}=(2,2),\overrightarrow{d}=(d_1,-d_1)$. In this case, $V^{\overrightarrow{r},\overrightarrow{d}}_{\mathrm{s}}$ is birationally equivalent to a $\mathbb{P}^{N'}$-fibration over $U(X,2,d_1-4g+4)\times S^{-2d_1+4g-4}X$ for $N'=-2d_1+4g-4$ \cite{B}.

Case VI: $\overrightarrow{r}=(1,1,2),\overrightarrow{d}=(d_1,d_2,-d_1-d_2)$. If $d_1-d_2\leq 2g-2, 2d_2+d_1<2g-2$, then $V^{\overrightarrow{r},\overrightarrow{d}}_{\mathrm{s}}$ is non-empty, and  it is birationally equivalent to a $\mathbb{P}^{N''}$-fibration over $\mathrm{ Jac}^{d_2-2g+2}(X)\times \mathrm{ Jac}^{-d_1-2d_2+2g-2}(X)\times S^{d_2-d_1+2g-2}X$ for $N''=-d_1-3d_2+5g-6$ \cite{A}. If  $d_1-d_2\leq 2g-2, 2d_2+d_1=2g-2$, then $V^{\overrightarrow{r},\overrightarrow{d}}_{\mathrm{s}}$ is empty, and  $V^{\overrightarrow{r},\overrightarrow{d}}\simeq \mathrm{ Jac}^{d_2-2g+2}(X)\times \mathrm{ Jac}^{0}(X)\times S^{d_2-d_1+2g-2}X$.

Case VII: $\overrightarrow{r}=(2,1,1),\overrightarrow{d}=(d_1,d_2,-d_1-d_2)$. If $d_1-d_2<2g-2, 2d_2+d_1\leq 2g-2$, then $V^{\overrightarrow{r},\overrightarrow{d}}_{\mathrm{s}}$ is non-empty, and it is birationally equivalent to a smooth irreducible  variety of dimension $9g-8-2d_1$ \cite{A}. If $d_1-d_2=2g-2, 2d_2+d_1\leq 2g-2$, then $V^{\overrightarrow{r},\overrightarrow{d}}_{\mathrm{s}}$ is empty, and $V^{\overrightarrow{r},\overrightarrow{d}}\simeq\mathrm{ Jac}^0(X)\times \mathrm{ Jac}^{d_1-4g+4}(X)\times S^{-3d_1+6g-6}(X)$.

Case VIII: $\overrightarrow{r}=(1,2,1),\overrightarrow{d}=(d_1,d_2,-d_1-d_2)$.  If $d_1-d_2\leq 2g-2, 2d_2+d_1\leq 2g-2, \overrightarrow{d}\neq (2g-2,0,-2g+2)$, then $V^{\overrightarrow{r},\overrightarrow{d}}_{\mathrm{s}}$  is non-empty, and it is birationally equivalent to a $\mathbb{P}^{N'''}$-fibration over $\mathrm{ Jac}^{d_1-4g+4}(X)\times \mathrm{ Jac}^{d_2-d_1}(X)\times S^{-2d_1-d_2+4g-4}X$ for $ N^{\prime\prime\prime}=-2d_1-d_2+4g-5$ \cite{A}. If  $ \overrightarrow{d}=(2g-2,0,-2g+2)$, then $V^{\overrightarrow{r},\overrightarrow{d}}_{\mathrm{s}}$  is empty, and $V^{\overrightarrow{r},\overrightarrow{d}}\simeq \mathrm{ Jac}^{-2g+2}(X)\times \mathrm{ Jac}^{-g+1}(X)$.

Therefore, the dimension of each irreducible component is given by
$$
\dim_\mathbb{C}V^{\overrightarrow{r},\overrightarrow{d}}=
       \left\{
          \begin{array}{ll}
            16g-15, &  \hbox{$\overrightarrow{r}=(4),\overrightarrow{d}=(0) $;}\\
            7g-6-2d_1-d_2-d_3, & \hbox{$\overrightarrow{r}=(1,1,1,1),\overrightarrow{d}=(d_1,d_2,d_3,-d_1-d_2-d_3)$;} \\
           13g-12-4d_1, & \hbox{$\overrightarrow{r}=(1,3)$ or $(3,1)$,$\overrightarrow{d}=(d_1,-d_1),d_1\neq g-1$;}\\
           5g-3, & \hbox{$\overrightarrow{r}=(1,3)$ or $(3,1)$,$\overrightarrow{d}=(g-1,-g+1)$;}\\                                                                                       
           12g-11-4d_1, & \hbox{$\overrightarrow{r}=(2,2),\overrightarrow{d}=(d_1,-d_1)$;}\\
           9g-8-2d_1-2d_2, & \hbox{$\overrightarrow{r}=(1,1,2)$,$\overrightarrow{d}=(d_1,d_2,-d_1-d_2),2d_2+d_1\neq 2g-2$;}\\
          5g-3-\frac{3}{2}d_1, & \hbox{$\overrightarrow{r}=(1,1,2)$,$\overrightarrow{d}=(d_1,g-1-\frac{d_1}{2},-g+1-\frac{d_1}{2})$;}\\
          9g-8-2d_1, & \hbox{$\overrightarrow{r}=(2,1,1)$,$\overrightarrow{d}=(d_1,d_2,-d_1-d_2),d_1-d_2\neq 2g-2$;}\\
          8g-6-3d_1, & \hbox{$\overrightarrow{r}=(2,1,1)$,$\overrightarrow{d}=(d_1,-2g+2+d_1,2g-2-2d_1)$;}\\
         10g-9-4d_1-2d_2, &  \hbox{$\overrightarrow{r}=(1,2,1)$, $\overrightarrow{d}=(d_1,d_2,-d_1-d_2)\neq(2g-2,0,-2g+2)$;}   \\ 2g, &  \hbox{$\overrightarrow{r}=(1,2,1)$, $\overrightarrow{d}=(2g-2,0,-2g+2)$.}
       \end{array}
        \right.
$$

Then it again follows  from Proposition \ref{11} that we have the inequalities
\begin{align*}
 0&<2d_1+d_2+d_3\leq\left\{
              \begin{array}{ll}
                6g-6, & \hbox{$\overrightarrow{r}=(1,1,1,1),\overrightarrow{d}=(d_1,d_2,d_3,-d_1-d_2-d_3)$;} \\
                g-1, & \hbox{$\overrightarrow{r}=(1,3)$ or $(3,1)$,$\overrightarrow{d}=(d_1,d_2=-d_1)$,$d_3=0$;}\\
                2g-2, & \hbox{$\overrightarrow{r}=(2,2)$,$\overrightarrow{d}=(d_1,d_2=-d_1)$,$d_3=0$;}\\
2g-2, & \hbox{$\overrightarrow{r}=(2,1,1)$,$\overrightarrow{d}=(d_1,d_2,-d_1-d_2)$;}
              \end{array}
            \right. \\
            0&<d_1+d_2\leq2g-2,    \overrightarrow{r}=(1,1,2),\overrightarrow{d}=(d_1,d_2,-d_1-d_2); \\
0&<2d_1+d_2\leq 4g-4, \overrightarrow{r}=(1,2,1), \overrightarrow{d}=(d_1,d_2,-d_1-d_2).
   \end{align*}
Hence, we arrive at
$$
\left\{
\begin{array}{ll}
\qquad\qquad \dim_\mathbb{C}V^{\overrightarrow{r},\overrightarrow{d}}=16g-15, \ \ &  \hbox{$\overrightarrow{r}=(4),\overrightarrow{d}=(0) $;}\\
\quad\quad\  g\leq\dim_\mathbb{C}V^{\overrightarrow{r},\overrightarrow{d}}<7g-6, \ \ & \hbox{$\overrightarrow{r}=(1,1,1,1),\overrightarrow{d}=(d_1,d_2,d_3,-d_1-d_2-d_3)$;} \\
5g-3\leq\dim_\mathbb{C}V^{\overrightarrow{r},\overrightarrow{d}}<13g-12, \ \ & \hbox{$\overrightarrow{r}=(1,3)$ or $(3,1)$,$\overrightarrow{d}=(d_1,-d_1)$;}\\
5g-4\leq\dim_\mathbb{C}V^{\overrightarrow{r},\overrightarrow{d}}<12g-11, \ \ & \hbox{$\overrightarrow{r}=(2,2),\overrightarrow{d}=(d_1,-d_1)$;}\\
\quad\ \ 2g\leq\dim_\mathbb{C}V^{\overrightarrow{r},\overrightarrow{d}}\leq9g-8, \ \ & \hbox{$\overrightarrow{r}=(2,1,1)$ or $(1,1,2)$,$\overrightarrow{d}=(d_1,d_2,-d_1-d_2)$;}\\
  2g-1\leq\dim_\mathbb{C}V^{\overrightarrow{r},\overrightarrow{d}}<10g-9, \ \ &  \hbox{$\overrightarrow{r}=(1,2,1)$, $\overrightarrow{d}=(d_1,d_2,-d_1-d_2)$.}                                                                                       \end{array}
\right.
$$ 
\end{enumerate}
\end{example}

\bigskip

Finally, we complete the proof of our main result.

\begin{corollary} \label{mainthm}
For the oper stratification $M_{\mathrm{dR}}=\coprod_\alpha S_\alpha$, we have
\begin{enumerate}
  \item the open dense stratum $N(X,r)$ consisting of irreducible flat bundles such that the underlying vector bundles are stable is the unique maximal stratum with dimension $2r^2(g-1)+2$,
  \item the closed oper stratum $S_{\mathrm{oper}}$ is the unique minimal stratum with dimension $r^2(g-1)+g+1$.
\end{enumerate}
\end{corollary}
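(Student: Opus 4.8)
The plan is to combine the half-dimension property of Hodge fibers (Lemma \ref{halfdim}) with the dimension bounds for the fixed-point components $V^{\overrightarrow{r},\overrightarrow{d}}$ from Theorem \ref{m}, translating statements about $V_\alpha\subset V(X,r)$ into statements about the strata $S_\alpha\subset M_{\mathrm{dR}}(X,r)$. First I would record that every stratum $S_\alpha$ fibers over $V_\alpha$ with Lagrangian fibers, so by Lemma \ref{halfdim} we have $\dim_{\mathbb C} S_\alpha = \dim_{\mathbb C} V_\alpha + \tfrac12\dim_{\mathbb C}\mathbb{M}_{\mathrm{dR}}(X,r) = \dim_{\mathbb C} V_\alpha + r^2(g-1)+1$, whenever $V_\alpha$ consists of stable $\mathbb C$-VHSs (the smooth locus; Proposition \ref{ff} ensures the semistable-but-not-stable locus is of strictly smaller dimension and so does not affect the extremal computations). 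Thus ordering strata by dimension is equivalent to ordering the components $V_\alpha$ by dimension, and the whole problem reduces to Theorem \ref{m}: the maximum $r^2(g-1)+1$ is attained exactly at $V^{(r),(0)}=i(\mathbb U(X,r))$, and the minimum $g$ is attained exactly at the uniformizing locus $V_{\mathrm{uni}}\simeq\mathrm{Jac}^{(r-1)(g-1)}(X)$.

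Next I would identify the two extremal strata concretely. For the maximal one: the component $V^{(r),(0)}$ consists of polystable vector bundles with zero Higgs field, so a flat bundle $(E,\nabla)$ lies in the corresponding stratum $S_\alpha$ iff the grading of its Simpson filtration is a (semistable) bundle with zero Higgs field, which by graded-semistability forces the trivial filtration, i.e. $E$ itself is semistable; on the smooth locus $M_{\mathrm{dR}}$ this is exactly the set $N(X,r)$ of irreducible flat bundles with stable underlying bundle. Its dimension is $r^2(g-1)+1 + r^2(g-1)+1 = 2r^2(g-1)+2$, matching the claim, and since $r^2(g-1)+1$ is the strict maximum of $\dim V^{\overrightarrow r,\overrightarrow d}$ by Theorem \ref{m}, $N(X,r)$ is the unique maximal stratum; openness and density follow because stability of the underlying bundle is a Zariski-open condition. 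For the minimal one: the oper stratum $S_{\mathrm{oper}}$ corresponds to $V_{\mathrm{uni}}\simeq\mathrm{Jac}^{(r-1)(g-1)}(X)$ of dimension $g$, so $\dim_{\mathbb C} S_{\mathrm{oper}} = g + r^2(g-1)+1 = r^2(g-1)+g+1$; Theorem \ref{m} says $g$ is the strict minimum of $\dim V^{\overrightarrow r,\overrightarrow d}$ attained only by the uniformizing type, so $S_{\mathrm{oper}}$ is the unique minimal stratum, and its closedness is the cited result of \cite{BB}.

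The one technical point that needs care is the passage through the non-stable locus: a priori a stratum $S_\alpha$ could be cut out by a component $V_\alpha$ some of whose points are strictly semistable $\mathbb C$-VHSs, and there Lemma \ref{halfdim} does not directly apply. I would handle this exactly as in Proposition \ref{ff}: the strictly semistable locus $V^{\overrightarrow r,\overrightarrow d}_{\mathrm{ss}}$ has positive codimension in $V^{\overrightarrow r,\overrightarrow d}$, hence contributes nothing to either the maximum or the minimum, and in particular $N(X,r)$ and $S_{\mathrm{oper}}$ both sit over loci of stable $\mathbb C$-VHSs where the dimension formula is valid. (Note the closed oper stratum $V_{\mathrm{uni}}$ already consists of stable $\mathbb C$-VHSs, so no subtlety arises there.) Assembling: every nonempty stratum has dimension in the closed interval $[r^2(g-1)+g+1,\ 2r^2(g-1)+2]$, with each endpoint attained by exactly one stratum, which is precisely the assertion of Corollary \ref{mainthm}.

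\begin{proof}
By Lemma \ref{halfdim} and the Lagrangian property of the fibers of $S_\alpha\to V_\alpha$ (see the Remark following the Foliation Conjecture), for any stratum $S_\alpha$ sitting over a component $V_\alpha$ of stable $\mathbb C$-VHSs we have
\begin{align*}
\dim_{\mathbb C} S_\alpha &= \dim_{\mathbb C} V_\alpha + \tfrac12\dim_{\mathbb C}\mathbb{M}_{\mathrm{dR}}(X,r)\\
&= \dim_{\mathbb C} V_\alpha + r^2(g-1)+1.
\end{align*}
By Proposition \ref{ff} the strictly semistable locus in any $V^{\overrightarrow r,\overrightarrow d}$ has positive codimension, so it does not affect the extremal values of $\dim_{\mathbb C} S_\alpha$, and the extremal strata lie over loci of stable $\mathbb C$-VHSs where the displayed formula holds. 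Hence the problem reduces to bounding $\dim_{\mathbb C} V^{\overrightarrow r,\overrightarrow d}$, which is Theorem \ref{m}: one has $g\le \dim_{\mathbb C} V^{\overrightarrow r,\overrightarrow d}\le r^2(g-1)+1$, the upper bound attained only for $\overrightarrow r=(r),\overrightarrow d=(0)$ and the lower bound only for the uniformizing type.

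For the upper bound, $V^{(r),(0)}=i(\mathbb U(X,r))$ parametrizes semistable bundles with zero Higgs field; by graded-semistability a flat bundle $(E,\nabla)$ maps into the corresponding stratum precisely when its underlying bundle is semistable, and on the smooth locus this is the set $N(X,r)$ of irreducible flat bundles with stable underlying bundle, which is Zariski-open and dense. Its dimension is $2r^2(g-1)+2$, and since $r^2(g-1)+1$ is the strict maximum in Theorem \ref{m}, $N(X,r)$ is the unique maximal stratum.

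For the lower bound, the oper stratum $S_{\mathrm{oper}}$ sits over the uniformizing locus $V_{\mathrm{uni}}\simeq\mathrm{Jac}^{(r-1)(g-1)}(X)$, which consists of stable $\mathbb C$-VHSs and has dimension $g$; hence $\dim_{\mathbb C} S_{\mathrm{oper}} = g + r^2(g-1)+1 = r^2(g-1)+g+1$. Since $g$ is the strict minimum in Theorem \ref{m}, $S_{\mathrm{oper}}$ is the unique minimal stratum, and it is closed in $M_{\mathrm{dR}}(X,r)$ by \cite{BB}.
\end{proof}
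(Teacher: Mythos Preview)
Your proposal is correct and follows essentially the same route as the paper: compute $\dim_{\mathbb C}S_\alpha=\dim_{\mathbb C}V_\alpha+\tfrac12\dim_{\mathbb C}M_{\mathrm{dR}}(X,r)$ via Lemma~\ref{halfdim}, then invoke Theorem~\ref{m} to pin down the extremal $V_\alpha$. The paper's proof is terser; your version adds two things the paper leaves implicit: the explicit identification of the stratum over $V^{(r),(0)}$ with $N(X,r)$ via the trivial Simpson filtration, and the use of Proposition~\ref{ff} to dispose of strictly semistable $\mathbb C$-VHSs. On the latter point the paper instead argues directly that a point of the smooth locus $M_{\mathrm{dR}}(X,r)$ limits to a \emph{stable} $\mathbb C$-VHS (so Lemma~\ref{halfdim} applies on the nose), whereas you treat it by codimension; either device suffices.
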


\begin{proof}
We have seen that $S_{\mathrm{oper}}$ is a fibration over $V_{\mathrm{uni}}$ with fibers as  Lagrangian submanifolds of $M_{\mathrm{dR}}(X,r)$, so the dimension is calculated as follows
\begin{align*}
 \dim_\mathbb{C}S_{\mathrm{oper}}&=\dim_\mathbb{C}\mathrm{Jac}^{(r-1)(g-1)}(X)+\frac{1}{2}\dim_\mathbb{C}M_{\mathrm{dR}}(X,r)\\
 &=r^2(g-1)+g+1.
\end{align*}

It follows from Theorem \ref{limit} that if $u$ is a stable $\mathbb{C}$-VHS, then $Y_u^1\subset M_{\mathrm{dR}}(X,r)$, and if $v\in M_{\mathrm{dR}}(X,r)$, then $\lim\limits_{t\to0}t\cdot v$ is a stable $\mathbb{C}$-VHS. For a general stratum $S_\alpha$, the dimension is given by $\dim_{\mathbb{C}}V_\alpha+\frac{1}{2}\dim_\mathbb{C}M_{\mathrm{dR}}(X,r)$, which reaches the maximal or minimum value only when the dimension of $V_\alpha$ reaches the maximal or minimum value. Note that  in two extreme cases when $(\overrightarrow{r},\overrightarrow{d})=((1,\cdots,1),((r-1)(g-1),(r-3)(g-1),\cdots,(-r+1)(g-1))$  and $(\overrightarrow{r},\overrightarrow{d})=((r),(0))$,  $V^{\overrightarrow{r},\overrightarrow{d}}$ are both the connected components of $V(X,r)$, then by Lemma \ref{irred} and Theorem \ref{m} the conclusion follows. 
\end{proof}

\end{document}